\documentclass[a4paper,12pt]{amsart}
\usepackage{amsfonts,amsthm,amssymb,amsmath,amscd}
\usepackage[colorlinks=true,linkcolor=blue,citecolor=blue]{hyperref}
\usepackage{amsmath}
\usepackage{tikz-cd}

\usepackage{tabularx}
\usepackage{tcolorbox}
\usepackage[english]{babel}
\usepackage{amssymb,amsmath}
\usepackage{xcolor}

\addtolength{\textwidth}{1cm}
\addtolength{\hoffset}{-0.5cm}

\addtolength{\textwidth}{1cm}
\addtolength{\hoffset}{-0.5cm}

\newtheorem{Theo}{Theorem}[section]
\newtheorem{Prop}[Theo]{Proposition}
\newtheorem{Coro}[Theo]{Corollary}
\newtheorem{Lemm}[Theo]{Lemma}

\newtheorem{Rema}[Theo]{Remark}


\newcommand{\T}{\mathbb{T}}

\newcommand{\D}{\mathbb{D}}
\newcommand{\C}{\mathbb{C}}
\newcommand{\Z}{\mathbb{Z}}

\DeclareMathOperator{\re}{Re}

\def\N{\mathbb{ N}}
\def\R{\mathbb{ R}}

\begin{document}
\title[Riesz summability on boundary lines]{Riesz summability on boundary lines\\ of holomorphic functions of finite order generated by Dirichlet series}

\author[Defant]{Andreas Defant}
\address[]{Andreas Defant\newline  Institut f\"{u}r Mathematik,\newline Carl von Ossietzky Universit\"at,\newline
26111 Oldenburg, Germany.
}
\email{defant@mathematik.uni-oldenburg.de}

\author[Schoolmann]{Ingo Schoolmann}
\address[]{Ingo Schoolmann\newline  Institut f\"{u}r Mathematik,\newline Carl von Ossietzky Universit\"at,\newline
26111 Oldenburg, Germany.
}
\email{ingo.schoolmann@uni-oldenburg.de}

\maketitle

\begin{abstract}
A particular consequence of the famous  Carleson-Hunt theorem is that the Taylor series expansions of
bounded holomorphic functions on the open unit disk  converge almost everywhere on the boundary, whereas on single points the convergence may fail. In contrast, Bayart,  Konyagin, and Queff\'elec constructed an example of  an ordinary Dirichlet series $\sum a_n n^{-s}$, which  on the open right half-plane $[\re >0]$  converges pointwise
to  a bounded, holomorphic function -- but diverges at each point of the imaginary line, although its limit function extends continuously to the closed right half plane. Inspired by a result of M.~Riesz, we study the boundary behavior of holomorphic functions $f$ on the right half-plane
which for some $\ell \ge 0$ satisfy  the growth condition $|f(s)| = O((1 + |s|)^\ell)$ and are generated by some
Riesz germ, i.e., there is a frequency $\lambda = (\lambda_n)$  and a  $\lambda$-Dirichlet series $\sum a_n e^{-\lambda_n s}$ such  that on some open subset of $[\re >0]$  and for some $m \ge 0$ the function $f$ coincides
with the pointwise  limit (as $x \to \infty$)  of  so-called
$(\lambda,m)$-Riesz means
$\sum_{\lambda_n < x} a_n e^{-\lambda_n s}\big( 1-\frac{\lambda_n}{x}\big)^m ,\,x >0\,.$
 Our main results present criteria for pointwise and uniform Riesz summability  of such functions on the boundary line $[\re =0]$, which include conditions that are motivated by classics like the Dini-test or the principle of localization.
\end{abstract}

\tableofcontents

\noindent
\renewcommand{\thefootnote}{\fnsymbol{footnote}}
\footnotetext{2010 \emph{Mathematics Subject Classification}: Primary 43A17, Secondary  30B50, 43A50} \footnotetext{\emph{Key words and phrases}: general Dirichlet series, finite order, Riesz summability, almost everywhere convergence, Hardy spaces.
} \footnotetext{}

\section{Introduction}

A  $\lambda$-Dirichlet series is a series of the form $D=\sum a_{n}(D) e^{-\lambda_{n}s}$, where $(a_{n}(D))$ is a sequence of  complex coefficients (called Dirichlet coefficients), $\lambda=(\lambda_{n})$ a strictly increasing, non-negative real sequence (called frequency), and $s$ a complex variable. A fundamental property states that, whenever $D$ converges at some complex number  $s=\sigma +i\tau$, it converges on the open half plane $[Re>\sigma]$, where its limit defines a holomorphic  function.

To recall two prominent examples observe that the choice $\lambda=(\log n)$ leads to ordinary Dirichlet series $\sum a_{n} n^{-s}$, whereas the choice $\lambda=(n)$ after the substitution $z=e^{-s}$ generates   power series $\sum a_{n} z^{n}$ in one variable.

Generally speaking, fixing a frequency $\lambda$, there are a couple of natural classes of holomorphic functions $f$ on $[\re > 0]$ which are uniquely  assigned to formal $\lambda$-Dirichlet series $D=\sum a_{n}(D) e^{-\lambda_{n}s}$, and then (still in vague terms) an often  highly involved question  is,

\begin{itemize}
  \item
  whether each such function $f$ on $[\re > 0]$ has a pointwise representation  under an appropriate method of summation
of the series $D$,
\item
and if yes,  whether this  representation even extends to (all or a subset of points of) the boundary line $[\re = 0]$.
\end{itemize}
Let us explain this   more precisely.
The results from \cite{DefantSchoolmann6,HardyRiesz}  motivate the following definition.

Given a frequency  $\lambda = (\lambda_n)$  and a holomorphic function $f:~[\re > 0] \to \mathbb{C}$, we call a $\lambda$-Dirichlet series $D=\sum a_{n}(D)e^{-\lambda_{n}s}$ a $\lambda$-Riesz germ of
$f$,
whenever  on some open subset of $U \subset [\re >0]$  and for some $m \ge 0$
\[
f(s) = \lim_{x \to \infty} R_x^{\lambda,m}(D)(s)\,, \,\,\,\, s \in U\,,
  \]
where the $(\lambda,m)$-Riesz means of $D$ in $s \in \mathbb{C}$ are defined by
\[
R_x^{\lambda,m}(D)(s) = \sum_{\lambda_{n} < x} a_n(D)  e^{-\lambda_n s} \Big(  1- \frac{\lambda_n}{x}  \Big)^m\,,\,\,\,x>0.
\]
In \cite[Corollary~2.15]{DefantSchoolmann6} it is proved that  $\lambda$-Riesz germs $D$ of $f$, whenever they  exist,  are unique, and as a consequence we in this case may assign to every such  $f$ the  unique sequence
\begin{equation*}
(a_n(f))_{n} = (a_n(D))_n\,,
\end{equation*}
which we  call  the 'sequence of Bohr coefficients of~$f$'.

    Consequently, given a holomorphic function  $f:~[\re > 0] \to \mathbb{C}$  generated by  the  $\lambda$-Riesz germ
  $D$, we may define the $x$th Riesz mean of order $k \geq 0$
  of $f $ in $s \in \mathbb{C}$  by
  \[
  R_x^{\lambda,k}(f)(s) =\sum_{\lambda_{n} < x} a_n(f) e^{-\lambda_n s} \Big(  1- \frac{\lambda_n}{x}  \Big)^k\,,
  \]
and a natural question then is to which extend these Riesz means 'reproduce' the function itself.

Within this setting a more precise formulation of the above questions reads as follows: Given a frequency $\lambda$ and
a holomorphic function  $f:~[\re > 0] \to \mathbb{C}$  generated by  the  $\lambda$-Riesz germ
  $D$,

\begin{itemize}
\item
  is there any $k \ge 0$ such that   $f$ on $[\re > 0]$ is pointwise $(\lambda, k)$-Riesz summable on $[\re >0]$,
\item
and if yes, to which extend does this approximation transfer to the boundary line $[\re = 0]$?
\end{itemize}
In the rest of this introduction we want to indicate that the first part of this question is fairly well understood,
and why we hence are going to concentrate on the second part.

\subsection{Classics} \label{A}
Let us illustrate all this, recalling some classics for the power series case $\lambda=(n)$.
Each  function  $f$ from the Hardy space $H_{\infty}(\mathbb{D})$ of all bounded and holomorphic functions on the open complex unit ball $\D$ determines  its (formal) Taylor series $P(z)=\sum \frac{\partial^{n} f(0)}{n!} z^{n},\, z \in \mathbb{C}$  (i.e. the  $(n)$-Dirichlet series $D=\sum a_n(D) e^{-ns}$  with  $a_n(D) = \frac{\partial^{n} f(0)}{n!}$ after the substitution $z = e^{-s}$), and moreover, the function $f$ is represented by its Taylor series in the sense that for every $z\in \mathbb{D}$
\begin{equation}\label{uno}
  f(z)=\sum_{n=1}^{\infty}\frac{\partial^{n} f(0)}{n!}z^{n}.
\end{equation}
Since by Fatou's theorem the radial limits of $f$, i.e.
\begin{equation*}
f^{*}(t)=\lim_{r\to 1} f(re^{it}),
\end{equation*}
exist for almost all $t\in [0,2\pi[$, one may ask, if $P$ converges almost everywhere on the boundary $\T$ and if in this case its pointwise limit coincides with $f^{*}$. A consequence of the famous Carleson-Hunt theorem indeed shows that for almost every $t\in [0,2\pi[$
\begin{equation*}
f^{*}(t)=\sum_{n=1}^{\infty} \frac{\partial^{n} f(0)}{n!} e^{-itn}\,.
\end{equation*}
 On the other hand, it is well-known that there exists a function  $f\in H_{\infty}(\D)$, that is  uniformly continuous on $\D$, although  its Taylor series diverges at certain points of the boundary $\T=\{z\in \C: |z|=1\}$.

Several criteria for pointwise convergence of the Taylor series from \eqref{uno} on the boundary are known. Having in mind that $H_{\infty}(\D)$ via $f\mapsto f^{*}$ is  isometrically isomorphic to  $H_{\infty}(\T)$, preserving Taylor coefficients $(\frac{\partial^{n} f(0)}{n!})$ and Fourier  coefficients~$(\widehat{f^{\ast}}(n))$,  i.e.
\begin{equation}\label{aus}
  H_{\infty}(\D) \,= \,H_{\infty}(\T)\,,
\end{equation}
the classical Dini test (see e.g. \cite[p.~53]{Katznelson}) states  that
for  $f\in H_{\infty}(\D) $ and $z_{0}\in \T$
\begin{equation} \label{equalclassic}
f^\ast(z_{0})=\sum_{n= 0}^{\infty} \frac{\partial^{n} f(0)}{n!} z_{0}^{n},
\end{equation}
whenever
\begin{equation*}
\int_{\T} \Big|\frac{f^\ast(z)-f^\ast(z_{0})}{z-z_{0}}\Big|~dz<\infty.
\end{equation*}
Moreover, if $I\subset \T$ is an open set such that $f^\ast(z)=0$ for all $z\in I$, then
\begin{equation} \label{localone}
\sum_{n=0}^{\infty}  \frac{\partial^{n} f(0)}{n!} z^{n}=0,~ z\in I,
\end{equation}
a fact  known as the principle of localization  (see \cite[p.~54]{Katznelson}).

\subsection{Ordinary Dirichlet series - a counter example} \label{B}
The situation changes dramatically, if we jump from the frequency $\lambda = (n)$ to the frequency $\lambda = (\log n)$. Recall that
\begin{equation*}
\mathcal{D}_{\infty}=\mathcal{D}_{\infty}((\log n))
\end{equation*}
denotes the linear space of all ordinary Dirichlet series $D = \sum a_n n^{-s}$
which converge on some half-plane $[\re > \sigma_0]$ and have a limit function $f$ on this half-plane extending to a bounded holomorphic
function on all of  $[\re > 0]$.

A fundamental theorem of Bohr from \cite{BohrStrip} (see e.g. \cite[Theorem~1.5]{defant2018Dirichlet}) shows that every $D \in \mathcal{D}_{\infty}$ in fact converges uniformly on all half-planes $[\re > \varepsilon]\,, \,\,\varepsilon>0$. This fact has many non-trivial consequences -- among others that  $\mathcal{D}_\infty$ endowed with the supremum norm
$\|D\|_\infty = \sup_{\re s >0} |f(s)|$  is a Banach space. For all needed information on ordinary Dirichlet series  see the monographs
\cite{defant2018Dirichlet} and \cite{queffelec2013diophantine}.

It may come as a surprise that in contrast to the  case $\lambda=(n)$,  Bayart,  Konyagin, and Queff\'elec  for the case
$\lambda = (\log n)$ in their article \cite{Bayart} prove the existence of a Dirichlet series $D\in \mathcal{D}_{\infty}$, that diverges at every point on the imaginary line $[\re =0]$ although its limit function $f$ extends continuously to the closed half plane $[ \re \ge 0]$.

Let us explain that this result may be interpreted as a result in infinite dimensional holomorphy as well as a result in harmonic.
Indeed, denote by $H_\infty (B_{c_0})$ the Banach space of all holomorphic (Fr\'echet differentiable) functions
 $g:B_{c_0}\rightarrow X$ endowed with the sup norm, where $B_{c_0}$ denotes  the open unit ball of the Banach space $c_0$
 of all complex null sequences.
  Then  there is a unique isometric linear bijection
 \begin{equation} \label{H=H}
 \mathcal{D}_{\infty}= H_\infty (B_{c_0})\,,\,\,\, D \mapsto g\,,
 \end{equation}
 which preserves Dirichlet coefficients $(a_n(D))$ and monomial coefficients $(\frac{\partial^{\alpha}g(0)}{\alpha!})$ in the sense that
 $a_{n}=\frac{\partial^{\alpha}g(0)}{\alpha!}$, whenever $n=\mathfrak{p}^{\alpha}$; here $\alpha = (\alpha_n)$ stands for a finite multi index with entries from $\mathbb{N}_0$ (we write $\alpha \in \mathbb{N}_0^{(\mathbb{N})}$) and $\mathfrak{p}=(p_{n})$ for the sequence of primes (see \cite{HLS} and  \cite[Theorem~3.8]{defant2018Dirichlet} for details).

 If $D$ and $g$ are associated to each other according to \eqref{H=H}, then by \cite[Theorem~3.8]{defant2018Dirichlet} for every $ s = u + it \in [\re > 0]$
 \begin{equation}\label{booky}
   g(\mathfrak{p}^{-s})=\sum_{n=1}^{\infty} a_{n}n^{-s} = \lim_{x \to \infty } \sum_{\mathfrak{p}^\alpha < x} \frac{\partial^{\alpha}g(0)}{\alpha!} \frac{1}{\mathfrak{p}^{\alpha u}}\frac{1}{\mathfrak{p}^{i\alpha t}}.
 \end{equation}
But for $u = 0$ this is in general not true -- the  Bayart-Konyagin-Queff\'elec example  shows the existence of a function $g \in H_\infty (B_{c_0})$, such that non of the  limits
\begin{equation}\label{BKQ}
  \lim_{x \to \infty } \sum_{\mathfrak{p}^\alpha < x} \frac{\partial^{\alpha}g(0)}{\alpha!} \frac{1}{\mathfrak{p}^{i\alpha t}}
\,,\,\,\, t \in \mathbb{R},
\end{equation}
exists.

For a reformulation of \eqref{BKQ} in terms  of harmonic analysis recall that the  countable product
$\mathbb{T}^\infty$ of the torus $\T = \{z \in \mathbb{C} \colon |z| =1 \}$
forms a compact abelian group, where  the Haar measure is given by the countable product of the  normalized Lebesgue measure.
The Hardy space $H_\infty(\mathbb{T}^\infty)$ is the closed subspace of all $f \in L_\infty(\mathbb{T}^\infty)$
such that the Fourier transforms $\widehat{f}: \widehat{\mathbb{T}^\infty} = \Z^{(\N)}\to \mathbb{C}$ have their supports in  $\N_0^{(\N)}$.
Then (as an analog of \eqref{aus}) there is an isometric isomorphism
\begin{equation} \label{H=H2}
  H_\infty (B_{c_0}) = H_\infty(\mathbb{T}^\infty)\,,\,\,\, g \mapsto f
\end{equation}
preserving monomial coefficients  $(\frac{\partial^{\alpha}g(0)}{\alpha!})$ and Fourier coefficients $(\widehat{f}(\alpha))$ (see e.g. \cite[Theorem~5.1]{defant2018Dirichlet}). Reformulating \eqref{BKQ}, we see that there is a function $f \in  H_\infty(\mathbb{T}^\infty)$
such that
 non of the  limits
\begin{equation}\label{BKQ2}
  \lim_{x \to \infty } \sum_{\mathfrak{p}^\alpha < x} \widehat{f}(\alpha) \frac{1}{\mathfrak{p}^{i\alpha t}}
\,,\,\,\, t \in \mathbb{R}
\end{equation}
exists.

Hedenmalm and Saaksman in \cite{HedenmalmSaksman} proved a Carleson type convergence theorem for $L_2(\mathbb{T}^\infty)$, i.e.
the Fourier series of every function in $L_2(\mathbb{T}^\infty)$ converges almost everywhere.
 A particular consequence of this  deep fact is  that
 there is an alternative  theorem  which sometimes  may compensate  for the loss caused by the Bayart-Konyagin-Queff\'{e}lec example.

 To explain this, recall that all characters $\chi: \mathbb{N} \to \mathbb{T}$, so all completely multiplicative mappings
from $\mathbb{N}$ into $\mathbb{T}$, in a natural way form a compact abelian group (identifying them with $\mathbb{T}^\infty$). Then we know from \cite[Theorem~1.4]{HedenmalmSaksman} (see also \cite[Theorem~2.1]{defant2020variants}),  that for every Dirichlet series $D = \sum a_n n^{-s}$ with $(a_{n})\in \ell_{2}$ (so in particular for Dirichlet series in  $\mathcal{D}_{\infty}$, see e.g. \cite[Corollary 4.11]{defantschoolmann2019Hptheory})
\begin{equation}\label{portugal}
  \sum a_n \chi(n) n^{-it}
\end{equation}
converges for almost all characters
$\chi: \mathbb{N} \to \mathbb{T}$ and almost all $t \in \mathbb{R}$. This is a considerable improvement of an earlier result of  Helson from \cite{Helson3} (see also \cite[Theorem~9]{HelsonBook}).

Supplementing all this,  we are going to show (see Corollary~\ref{APcaseAP}) that the limit function $f$ of any ordinary Dirichlet series
$D = \sum a_n n^{-s}\in \mathcal{D}_\infty $ extends almost everywhere to the imaginary line $i\mathbb{R}$, where it is almost everywhere  $((\log n),k)$-Riesz-summable at any order $k >0$, i.e. for almost all $t \in \mathbb{R}$ and all $k >0$ the $((\log n),k)$-Riesz limit
\begin{equation}\label{goal}
   \lim_{x \to \infty} \sum_{\log n < x} a_n \frac{1}{n^{it}}\Big(  1- \frac{\log n}{x} \Big)^k\,
\end{equation}
exists. Moreover, it will turn out that under certain further analytic assumptions on the limit function $f$ of $D$ on $[\re >0]$ this convergence improves considerably (Section~\ref{unif}).

\subsection{General Dirichlet series and uniform almost periodicity} \label{C}
The results on ordinary Dirichlet series which we just indicated, will be performed within a setting
of general Dirichlet series -- a far more challengeing task.

Fixing a frequency
$\lambda$, an extensive  'modern' study of $\lambda$-Dirichlet series $\sum a_n e^{-\lambda_n s}$ has been started in the recent articles
\cite{Ba20},  \cite{CaDeMaSc_VV}, \cite{defant2018Dirichlet}, \cite{defantschoolmann2019Hptheory}, \cite{defant2020riesz}, \cite{defant2020variants}, \cite{schoolmann2018bohrA},  \cite{schoolmann2018bohr}, and one of the major concepts is the introduction of so-called Hardy spaces $\mathcal{H}_{p}(\lambda)$ of $\lambda$-Dirichlet series
(in Section~\ref{carleson} we repeat the definition).

The particular case $p=\infty$ is of special interest, since then $\mathcal{H}_{\infty}(\lambda)$ may be described in terms of holomorphic functions on the right half-plane,  and in fact
our purposes in this article demand only this case.

Therefore, recall that $H_{\infty}^{\lambda}[\re > 0]$ (as defined in \cite{defant2020riesz}) denotes the linear  space of all holomorphic and bounded functions  $f\colon [\re > 0]\to \C$, which are  almost periodic on all  vertical lines $[\re =\sigma]$
(or equivalently, some  line $[\re =\sigma]$)  and have  Bohr coefficients
  \begin{equation} \label{bohrcoeffintro}
a_{x}(f)=\lim_{T\to \infty} \frac{1}{2T} \int_{-T}^{T} f(\sigma +it) e^{(\sigma+it)x} dt, \quad x \in \mathbb{R}
\end{equation}
  supported in $\{\lambda_{n} \mid n \in \N\}$.  Note that here
   the limits \eqref{bohrcoeffintro}  are independent of  the choice of $\sigma>0$.
   See e.g. \cite[Section~1.5.2.2]{queffelec2013diophantine} for  the definition
  of almost periodic functions on $\mathbb{R}$, in particular \cite[Theorem 1.5.5]{queffelec2013diophantine} for a couple of  important equivalent reformulations
  of its definition.

   Together with the sup norm, taken  on the right half-plane, $H_{\infty}^{\lambda}[\re > 0]$ forms a Banach space, and by \cite[Theorem 2.16]{defant2020riesz} there is a coefficient preserving isometric linear bijection
   identifying the Hardy space $\mathcal{H}_{\infty}(\lambda)$ of $\lambda$-Dirichlet series and $H_{\infty}^{\lambda}[\re > 0]$,
   \begin{equation} \label{periodic-coin}
     \mathcal{H}_{\infty}(\lambda) \,= \,H_{\infty}^{\lambda}[\re > 0]\,.
   \end{equation}
   Moreover, by \cite[Corollary~4.11]{DefantSchoolmann6} we know that for every bounded and  holomorphic function $f: [\re >0] \to \mathbb{C}$ we have
   \begin{equation} \label{aha}
     \text{$f \in H_{\infty}^{\lambda}[\re > 0]$ if and only if $f$ has a $\lambda$-Riesz germ}\,.
   \end{equation}
       Let us consider an important subspace of $H_{\infty}^{\lambda}[\re > 0]$.  By $\mathcal{D}_\infty(\lambda)$ we denote the space of all Dirichlet series $\sum a_n(D) e^{-\lambda_n s}$ which
converge on $[\re > 0]$ and have a bounded limit function $f:[\re > 0] \to \mathbb{C}$. Then $\mathcal{D}_\infty(\lambda)$ is a normed space if we endow it with the supremum norm  $\|f\| = \sup_{\re s >0}|f(s)| $, and by \cite[Corollary~2.17]{defant2020riesz} we may interpret it as an isometric subspace of $H_{\infty}^{\lambda}[\re > 0]$, where the Dirichlet and Bohr coefficients are preserved.

We say that a frequency $\lambda$ satisfies Bohr's theorem whenever
every $\lambda$-Dirichlet series $D = \sum a_n(D) e^{-\lambda_n s}$, which converges on some half-plane and  has a limit function extending to a bounded, holomorphic function to $[\re > 0]$, in fact converges uniformly on all half-planes $[\re > \varepsilon], \, \varepsilon >0$;
in other terms,
\begin{equation*}
f(s)= \lim_{x \to \infty} R_x^{\lambda,0} f(s) = \lim_{x \to \infty}\sum_{\lambda_n < x} a_n(f) e^{-\lambda_n s}
\end{equation*}
uniformly on $[\re > \varepsilon]$ for all $\varepsilon >0$.
As indicated in the preceding section the frequency $\lambda = (\log n)$ satisfies Bohr's theorem.

A delicate question, which came up  in \cite{defant2020variants},  then is whether we have
\begin{equation} \label{D=H}
     \mathcal{D}_{\infty}(\lambda) \,= \,H_{\infty}^{\lambda}[\re > 0]\,,
   \end{equation}
i.e., each $f\in H_{\infty}^{\lambda}[\re > 0]$ is represented by its Dirichlet series in the sense that $f(s)=\sum_{n=1}^{\infty} a_{n} e^{-\lambda_{n}s}$ for all $s\in [\re > 0]$.
A positive answer  is provided by the so-called equivalence theorem  from \cite[Theorem~5.1]{defant2020variants}
stating that \eqref{D=H} holds if and only if $\lambda$ satisfies Bohr's theorem if and only if $\mathcal{D}_\infty(\lambda)$ is a Banach space  if and only if for every $\sigma>0$ there is a constant $C>0$ such that for all complex sequences $(a_{n})$ and $M\in \mathbb{N}$
\begin{equation*}
\sup_{N\le M} \sup_{t\in \R} \big|\sum_{n=1}^{N} a_{n}e^{-it\lambda_{n}}\big|\le Ce^{\sigma \lambda_{M}} \sup_{t\in \R} \big|\sum_{n=1}^{M} a_{n}e^{-it\lambda_{n}}\big|
\end{equation*}
(for the third equivalence see
\cite[Theorem~4.12]{CaDeMaSc_VV}).
Counterexamples of  frequencies $\lambda$,  failing the equality in \eqref{D=H}, are then provided by \cite[Theorem~5.2]{schoolmann2018bohr}, and  concrete sufficient conditions on $\lambda$ were given by Bohr \cite{Bohr2}, Landau~\cite{Landau}, and more recently  Bayart~\cite{Ba20}. These criteria in particular prove that
\begin{equation} \label{DD=HH}
\mathcal{D}_{\infty}((n)) \,= \,H_{\infty}^{(n)}[\re > 0]
\,\,\,\, \text{and}
     \,\,\,\,
     \mathcal{D}_{\infty}((\log n)) \,= \,H_{\infty}^{(\log n)}[\re > 0]\,.
        \end{equation}

More generally than what we  announced  for  the ordinary case in \eqref{goal}, we are going to show that every $f \in H_{\infty}^{\lambda}[\re > 0]$ extends almost everywhere to the imaginary axis, where it for all $k >0$ is $(\lambda,k)$-Riesz summable almost everywhere, i.e. for almost all  $t \in \R$ the horizontal limits
\begin{equation*}
f^{*}(it)= \lim_{\varepsilon\to 0} f(\varepsilon+i\tau)
\end{equation*}
exist, and for all $k >0$
\begin{equation}\label{herbst}
  f^{*}(it) =  \lim_{x \to \infty}R_x^{\lambda,k}(f)(it) =\sum_{\lambda_{n} < x} a_n(f) e^{-i \lambda_n t} \Big(  1- \frac{\lambda_n}{x}  \Big)^k
\end{equation}
    (see Corollary~\ref{APcase}).
    In particular, \eqref{herbst}  holds true for limit functions of Dirichlet series from $\mathcal{D}_\infty(\lambda)$,
 since (as mentioned before) $\mathcal{D}_{\infty}(\lambda)\subset H_{\infty}^{\lambda}[\re >0]$.
Moreover, the convergence in \eqref{herbst} improves considerably  whenever $f$ fulfills certain additional assumptions.

\bigskip

What happens, if we consider unbounded holomorphic functions
on $[\re >0]$?

\subsection{Two theorems of M. Riesz} \label{D}
Indeed, already M. Riesz in his article \cite{Riesz} from 1909 gave a positive answer, stating a sufficient condition for a wider class of holomorphic functions on $[\re >0]$ which are not necessarily bounded. We recall his two beautiful results from \cite[Theorem~41,~42]{HardyRiesz}, which  (here reformulated using  our notions) in fact were  the starting point of our research.

\begin{Theo}\label{Theo41} Let $f\colon [Re>0] \to \C$ be holomorphic with a $\lambda$-Riesz germ.
Assume that there is $\ell\ge 0$ such that
\begin{equation} \label{conditionintroA}
\forall~ \varepsilon>0~ \exists~ C(\varepsilon)>0\colon |f(s)|\le C(\varepsilon)|s|^{\ell}, ~~ s\in [Re>\varepsilon].
\end{equation}
Then for every $k>\ell$ and $s\in [Re>0]$
\begin{equation*}
f(s)=\lim_{x\to \infty}R_{x}^{\lambda,k}(f)(s).
\end{equation*}
\end{Theo}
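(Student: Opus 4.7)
The strategy is a Mellin--Perron contour integral argument: I will represent $R_x^{\lambda,k}(f)(s)$ as a vertical line integral in an auxiliary variable $w$, then shift the line of integration across the pole at $w=0$ to extract a residue equal to $f(s)$ (up to terms vanishing as $x\to\infty$), and finally use the polynomial growth of $f$ together with the assumption $k>\ell$ to show that the shifted integral is negligible.

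Starting from the classical identity
\[
\frac{y_+^k}{\Gamma(k+1)} \,=\, \frac{1}{2\pi i}\int_{c-i\infty}^{c+i\infty}\frac{e^{yw}}{w^{k+1}}\,dw \qquad (c>0,\; y\in\R),
\]
applied to $y=x-\lambda_n$ and then summed against $a_n(f)e^{-\lambda_n s}$, one obtains the Perron-type representation
\[
R_x^{\lambda,k}(f)(s) \,=\, \frac{\Gamma(k+1)}{2\pi i\,x^k}\int_{c-i\infty}^{c+i\infty} f(s+w)\,\frac{e^{xw}}{w^{k+1}}\,dw,
\]
valid for $c>0$ sufficiently large that the interchange of sum and integral is legitimate. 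For $s\in[\re>0]$ fixed and $0<\delta<\re s$, the plan is to shift the contour from $\re w = c$ to $\re w = -\delta$. Throughout the closed strip $-\delta\le\re w\le c$ the integrand is holomorphic except for a pole of order $k+1$ at $w=0$, and a routine Taylor expansion yields
\[
\operatorname{Res}_{w=0}\!\left[f(s+w)\,\frac{e^{xw}}{w^{k+1}}\right] \,=\, \frac{1}{k!}\sum_{j=0}^{k}\binom{k}{j}\, f^{(j)}(s)\, x^{k-j},
\]
so after multiplication by $\Gamma(k+1)/x^k = k!/x^k$ the residue contribution becomes $f(s) + \sum_{j=1}^{k}\binom{k}{j} f^{(j)}(s)\,x^{-j}$, which tends to $f(s)$ as $x\to\infty$.

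It remains to check that the integral along $\re w=-\delta$ is negligible. Parametrizing $w=-\delta+it$, one has $|e^{xw}|=e^{-\delta x}$, and the growth hypothesis \eqref{conditionintroA} gives $|f(s+w)|\le C(\re s-\delta)\,|s+w|^\ell \lesssim_{s,\delta}(1+|t|)^\ell$. Thus the shifted integral, carrying the factor $k!/x^k$, is bounded (up to an $s,\delta$-constant) by
\[
\frac{e^{-\delta x}}{x^k}\int_{-\infty}^{\infty}\frac{(1+|t|)^\ell}{(\delta^2+t^2)^{(k+1)/2}}\,dt,
\]
and the $t$-integral converges precisely when $(k+1)-\ell>1$, that is, $k>\ell$---exactly our assumption. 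The decay $e^{-\delta x}/x^k$ then drives the whole tail to $0$, which together with the residue calculation completes the proof.

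The main obstacle is the first step: justifying the Perron representation under the bare assumption of a $\lambda$-Riesz germ. Since one does not assume ordinary or absolute convergence of the Dirichlet series anywhere, the naive Fubini exchange of sum and contour integral must be replaced by an Abel/summation-by-parts argument carried out directly at the level of the Riesz means $R_x^{\lambda,m}$, exploiting the abscissa hierarchy for Riesz summability and the polynomial bounds such means enjoy on right half-planes strictly inside $[\re>0]$. Once the representation is available, the residue computation and the tail estimate---where the interplay between the order $k$ and the growth order $\ell$ becomes decisive---proceed in a completely classical fashion.
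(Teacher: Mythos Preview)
Your contour–shift argument has a genuine gap: it only makes sense when $k$ is a non-negative integer. For general real $k>\ell$ the factor $w^{-(k+1)}$ does not have a pole at $w=0$ but a branch point, so there is no residue to extract, and identities such as $\Gamma(k+1)=k!$ or the finite Leibniz sum $\sum_{j=0}^{k}\binom{k}{j}f^{(j)}(s)x^{-j}$ are meaningless. Since the whole force of Riesz summation lies in allowing non-integral orders (one typically wants $k$ just slightly larger than $\ell$), this is not a cosmetic issue---the heart of the argument collapses in the generic case.

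The paper does not prove Theorem~\ref{Theo41} itself (it is quoted from Hardy--Riesz), but the method visible in the surrounding material and in the appendix proof of Theorem~\ref{Theo42} avoids any contour shift across the origin. One keeps the Perron integral on a vertical line in $[\re>0]$, chooses the abscissa to depend on $x$ (namely $c=x^{-1}$), and after the substitution $t\mapsto t/x$ obtains
\[
R_x^{\lambda,k}(f)(s)=\int_{\R} f\bigl(s+x^{-1}+iyx^{-1}\bigr)\,e^{iy}\,K^{k}(y)\,dy,
\qquad K^{k}(y)=\frac{\Gamma(1+k)\,e}{2\pi}\,(1+iy)^{-(1+k)}.
\]
Now the growth hypothesis $|f(s+w)|\le C(\re s)\,|s+w|^{\ell}$ gives a dominating function in $L_1(\R)$ precisely when $k>\ell$, and dominated convergence together with $\int_{\R}e^{iy}K^{k}(y)\,dy=1$ yields $R_x^{\lambda,k}(f)(s)\to f(s)$. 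This works for every real $k>\ell$, which is what your residue approach cannot deliver.

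Your identification of the ``main obstacle''---justifying the Perron representation without absolute convergence---is correct and is indeed handled in the literature by summation-by-parts at the level of Riesz means; but fixing that step would still leave the non-integer $k$ problem unresolved.
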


\begin{Theo} \label{Theo42}
Let $f\colon [Re>0] \to \C$ be holomorphic with a $\lambda$-Riesz germ and $k>\ell\ge 0$. Assume that  $f$ extends continuously to $[Re\ge 0]$ with the exception of finitely many poles $p_{1},\ldots p_{m}$ on $[Re=0]$ of order $< k+1$. If there exist  $C,\tau_{0}>0$ such that for all $s=\sigma +i\tau \in [Re> 0]$ with $|\tau|\ge \tau_{0}$
\begin{equation*}
|f(s)|\le C |s|^{\ell}\,,
\end{equation*}
then for every $i\tau\notin \{p_{1},\ldots, p_{m}\}$ we have
\begin{equation*}
f(i\tau) = \lim_{x\to \infty}R_{x}^{\lambda,k}(f)(i\tau)\,.
\end{equation*}
Moreover, on every closed interval $I\subset [Re=0]\setminus \{p_{1},\ldots, p_{m}\}$  the convergence is uniform.
\end{Theo}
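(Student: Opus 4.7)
The plan is first to reduce, by considering the shifted function $\tilde{f}(w):=f(w+i\tau)$, to the case $\tau=0$: $\tilde f$ inherits the Riesz germ, the growth hypothesis (with constants bounded uniformly in $\tau\in I$), and the pole structure (with translated poles at $p_j-i\tau$ of the same orders $n_j$, and, since $i\tau\notin\{p_j\}$, bounded away from $0$ uniformly for $\tau\in I$); moreover $R_x^{\lambda,k}(\tilde f)(0)=R_x^{\lambda,k}(f)(i\tau)$. It thus suffices to prove $R_x^{\lambda,k}(f)(0)\to f(0)$ assuming $f$ is continuous at $0$ and its (finitely many) boundary poles $p_j\in i\mathbb{R}\setminus\{0\}$ have orders $n_j<k+1$.

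The key ingredient would be a Perron-type integral representation of the Riesz means, valid for any $c>0$ under the present growth hypothesis (and presumably already established in the paper's earlier sections or in the classical reference \cite{HardyRiesz}):
\begin{equation*}
R_x^{\lambda,k}(f)(0)=\frac{\Gamma(k+1)}{2\pi i\,x^k}\int_{c-i\infty}^{c+i\infty} f(w)\,\frac{e^{xw}}{w^{k+1}}\,dw\,.
\end{equation*}
The trick specific to the boundary case is to specialise to $c=1/x$ and substitute $u=xw$, which shifts the contour to the fixed line $\re u=1$ and absorbs the factor $1/x^k$:
\begin{equation*}
R_x^{\lambda,k}(f)(0)=\frac{\Gamma(k+1)}{2\pi i}\int_{1-i\infty}^{1+i\infty} f\!\left(\frac{u}{x}\right)\frac{e^u}{u^{k+1}}\,du\,.
\end{equation*}
Now the kernel $e^u/u^{k+1}$ is $x$-independent and integrable, and the classical identity $\frac{1}{2\pi i}\int_{1-i\infty}^{1+i\infty}e^u/u^{k+1}\,du=1/\Gamma(k+1)$ makes it transparent that, if one can pass $x\to\infty$ under the integral, the limit will equal $f(0)$.

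Justifying this interchange is the main task, and I would carry it out by splitting the contour into three pieces. On the compact part $|u|\le R$ the argument $u/x$ stays in a small neighborhood of $0$ where $f$ is continuous and bounded, so the integrand is uniformly bounded and dominated convergence gives $f(0)\int_{|u|\le R,\,\re u=1}e^u/u^{k+1}\,du$ in the limit. On the bulk tail $|u|>R$ with $u/x$ at distance at least $\delta$ from every boundary pole, the growth hypothesis together with local boundedness yields $|f(u/x)|\le C(1+|u|)^\ell$ uniformly in $x\ge 1$, so the integrand is dominated by $(1+|u|)^{\ell-k-1}$, integrable because $k>\ell$, and this contribution tends to $0$ as $R\to\infty$ uniformly in $x$. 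Finally, on the near-pole arcs where $u$ lies within distance $O(1)$ of some $xp_j$ (which migrates along the imaginary $u$-axis at speed proportional to $x$), the local estimate $|f(s)|\le C_j|s-p_j|^{-n_j}$ combined with $|u|\sim x|p_j|$ produces a contribution of order $x^{n_j-k-1}$, vanishing as $x\to\infty$ precisely because of the hypothesis $n_j<k+1$.

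The hard step will be this last near-pole analysis: although $f$ blows up polynomially at each $p_j$, the corresponding singularities retreat to infinity in the $u$-plane at speed proportional to $x$, and the decay $|u|^{-(k+1)}$ of the kernel must win over the blow-up by a margin of at least $x^{-1}$ exactly under the hypothesis $n_j<k+1$. All the estimates above depend on $\tau$ only through $|\tau|$ and the distance of $\tau$ to the imaginary parts of the original poles, both controlled uniformly on $I$; a standard uniform-continuity argument on the compact set of arguments $\{i\tau+u/x:\tau\in I,\,|u|\le R\}$ for $x$ sufficiently large then upgrades the pointwise convergence to uniform convergence on $I$.
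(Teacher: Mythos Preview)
Your proposal is correct and follows essentially the same route as the paper's appendix proof: both start from the Perron formula with $c=1/x$, pass to a fixed contour via the substitution $u=xw$ (equivalently, the paper's $y=xt$), and split into a near-origin piece converging to $f(0)$ by continuity and dominated convergence, a bulk tail controlled by the growth bound $|f|\le C|s|^\ell$ giving an integrable majorant $|u|^{\ell-k-1}$, and near-pole arcs whose contribution is $O(x^{n_j-k-1})$ (with an extra $\log x$ factor when $n_j=1$, which you should note but which still vanishes). The paper organises the uniformity slightly differently---proving it first on a sufficiently small interval and then invoking compactness---but this is equivalent to your translation-with-uniform-constants reduction.
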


We illustrate the last theorem with a  well-known example (see e.g. \cite{HardyRiesz}). Take the Riemannian Dirichlet series $\sum n^{-s}$, that converges absolutely on $[\re >1]$,  and consider its analytic continuation $\xi$, namely the zeta function, on $[\re > 0]$  given by the formula
\begin{equation} \label{zeta}
(1-2^{1-s}) \xi(s)=\sum_{n=1}^{\infty} (-1)^{n} n^{-s}\,.
\end{equation}
Recall that $\xi$  has a simple pole at $s=1$, and satisfies the estimate
\begin{equation} \label{zetaestimate}
|\xi(1+it)|\le C\log(|t|), ~~ |t|\ge 1.
\end{equation}
After an obvious translation, Theorem \ref{Theo42} is applicable for every $\ell>0$, and so as a consequence
for every $k>0$ and $\tau \in \mathbb{R} \setminus\{0\}$
\begin{equation}\label{zetapole}
  \xi(1+i\tau)=\lim_{x\to \infty}\sum_{\log(n)<x} n^{-(1+i\tau)} \Big(1-\frac{\log(n)}{x}\Big)^{k}\,,
\end{equation}
with uniform convergence on every closed interval $I\subset \mathbb{R} \setminus\{0\}$.

\subsection{New results in a new  setting} \label{E}
Motivated by these two theorems of Riesz we in \cite{DefantSchoolmann6} define, given a frequency $\lambda$
and $\ell \ge 0$, the space
\begin{equation*}
H_{\infty,\ell}^\lambda[\re>0]\,,
\end{equation*}
collecting    all
holomorphic functions $f:~[\re > 0] \to \mathbb{C}$, which are generated by a $\lambda$-Riesz germ and satisfy
the growth condition
\begin{equation}\label{norway}
  \|f\|_{\infty,\ell} = \sup_{\re s > 0} \Big|
\frac{f(s)}{(1+s)^\ell} \Big|< \infty\,.
\end{equation}
That  this in fact leads to a Banach space $(H_{\infty,\ell}^\lambda[\re>0]), \|\cdot\|_{\infty,\ell})$ is a non-trivial fact proved in \cite[Theorem~3.16]{DefantSchoolmann6}. The case $\ell=0$ is of special interest, since then
\begin{equation}\label{l=0}
    H_{\infty}^{\lambda}[Re>0] = H_{\infty,0}^{\lambda}[\re > 0]\,\,\,\, \text{isometrically\,;}\,
  \end{equation}
this  was already remarked in \eqref{aha} within a slightly different context.

In \cite{DefantSchoolmann6} we performed a sort of  structure theory of these Banach spaces -- mainly based on a considerable extension of Theorem~\ref{Theo41}. In fact, most of the results we derive there, are consequences
of the  following approximation theorem from \cite[Theorem~3.7]{DefantSchoolmann6} for functions in $H^{\lambda}_{\infty,\ell}[\re >0]$ in terms of their Riesz means.

\begin{Theo} \label{41}
Let $k >\ell \ge 0$ and $f\in H_{\infty,\ell}^\lambda[\re >0]$. Then
for every $u >0$
\begin{equation*}
\lim_{x\to \infty} R_x^{\lambda,k}(f)
 (u + \pmb{\cdot})= f(u+\pmb{\cdot})  \,\,\,\text{ in }   \,\,\, H_{\infty,\ell}^\lambda[\re >0].
\end{equation*}
In particular,
 for every  $s\in [Re>0]$
\begin{equation*}
f(s)=\lim_{x\to \infty} R_{x}^{\lambda,k}(f)(s).
\end{equation*}
\end{Theo}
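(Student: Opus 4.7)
The plan is to combine the Perron--Riesz integral representation of the $(\lambda,k)$-Riesz means with a contour shift that extracts $f(u+s)$ as a residue at $w=0$ and reduces everything else to a single estimate governed by the growth exponent $\ell$.

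\textbf{Step 1 (Perron--Riesz representation).} For $\re s>0$ and $c>0$, one starts from the classical identity
$$\frac{1}{2\pi i}\int_{c-i\infty}^{c+i\infty}\frac{e^{(x-\lambda_{n})w}}{w^{k+1}}\,dw \;=\;\frac{(x-\lambda_{n})_{+}^{k}}{\Gamma(k+1)}\,,$$
multiplies by $a_{n}(f)e^{-\lambda_{n}(u+s)}$, and sums, to arrive at
$$R_{x}^{\lambda,k}(f)(u+s) \;=\;\frac{\Gamma(k+1)}{2\pi i\,x^{k}}\int_{c-i\infty}^{c+i\infty} f(u+s+w)\,\frac{e^{xw}}{w^{k+1}}\,dw\,. \qquad (\ast)$$
The identity $(\ast)$ is first verified for $\lambda$-Dirichlet polynomials and then extended to $f\in H_{\infty,\ell}^{\lambda}[\re >0]$ via the Riesz-germ property (cf.\ \cite[Corollary~4.11]{DefantSchoolmann6}) together with the bound $|f(u+s+w)|\le \|f\|_{\infty,\ell}(1+|u+s+w|)^{\ell}$; the hypothesis $k>\ell$ guarantees absolute convergence of the right-hand side of~$(\ast)$.

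\textbf{Step 2 (Contour shift).} Since $f(u+s+\pmb{\cdot})$ is holomorphic on $\{\re w>-(u+\re s)\}$, I shift the contour in $(\ast)$ from $\re w=c$ to $\re w=-\eta$ for any fixed $0<\eta<u$ (valid uniformly in $s$ with $\re s>0$). For integer $k$ the residue theorem gives
$$\operatorname{Res}_{w=0}\Bigl[f(u+s+w)\,\tfrac{e^{xw}}{w^{k+1}}\Bigr]\;=\;\sum_{j=0}^{k}\frac{f^{(j)}(u+s)}{j!\,(k-j)!}\,x^{k-j}\,,$$
whose leading term $x^{k}f(u+s)/k!$ combines with the prefactor $\Gamma(k+1)/x^{k}$ in $(\ast)$ to produce precisely $f(u+s)$; in the non-integer case one uses an analogous Hankel-contour deformation around the branch cut of $w^{-k-1}$, and the same main term is recovered via $1/\Gamma(k+1)=\tfrac{1}{2\pi i}\int_{H}e^{w}w^{-k-1}\,dw$. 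Subtracting yields
$$R_{x}^{\lambda,k}(f)(u+s)-f(u+s)\;=\;\frac{\Gamma(k+1)}{2\pi i\,x^{k}}\int_{-\eta-i\infty}^{-\eta+i\infty} f(u+s+w)\,\frac{e^{xw}}{w^{k+1}}\,dw\;+\;E_{x}(s)\,,$$
with $E_{x}(s)$ collecting the lower-order residue terms, each of the form $c_{j}\,x^{-j}f^{(j)}(u+s)$ with $j\ge 1$.

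\textbf{Step 3 (Uniform estimate and main obstacle).} Parametrising the shifted contour as $w=-\eta+it$, one has $|e^{xw}|=e^{-\eta x}$ and
$$|f(u+s+w)|\;\le\;\|f\|_{\infty,\ell}(1+u+|s|+|t|)^{\ell}\;\lesssim\;(1+|s|)^{\ell}(1+|t|)^{\ell}\,,$$
so
$$\Bigl|\!\int_{-\eta-i\infty}^{-\eta+i\infty}\! f(u+s+w)\,\frac{e^{xw}}{w^{k+1}}\,dw\Bigr|\;\le\;C_{\eta,k,\ell}\,(1+|s|)^{\ell}\,e^{-\eta x}\,,$$
where the underlying one-dimensional integral $\int_{\R}(1+|t|)^{\ell}(\eta^{2}+t^{2})^{-(k+1)/2}\,dt$ is finite \emph{precisely because} $k>\ell$. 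The contribution to the weighted norm is thus $O(e^{-\eta x}/x^{k})$. The derivatives appearing in $E_{x}(s)$ are handled by Cauchy's estimate on a disc of radius $u/2$ inside $[\re>u/2]$, giving $|f^{(j)}(u+s)|\lesssim_{j,u}(1+|s|)^{\ell}$, hence $E_{x}$ is $O(1/x)$ in $\|\cdot\|_{\infty,\ell}$. Together these prove $R_{x}^{\lambda,k}(f)(u+\pmb{\cdot})\to f(u+\pmb{\cdot})$ in $H_{\infty,\ell}^{\lambda}[\re>0]$, and the pointwise statement follows by evaluation at a single $s$. \emph{The hardest step is the rigorous derivation of $(\ast)$}, since the Riesz germ $D$ need not converge (even ordinarily) on any half-plane; one circumvents this by approximating $f$ by its own Riesz means, for which $(\ast)$ is elementary, and then exploits $k>\ell$ to exchange the limit with the integral.
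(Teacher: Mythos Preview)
The paper does not prove this statement; it is imported verbatim from \cite[Theorem~4.7]{DefantSchoolmann6}, so there is no in-paper argument to compare against. That said, your route via the Perron--Riesz formula (which is exactly Theorem~\ref{new-ban}, itself quoted from the same source) followed by a leftward contour shift is correct when $k$ is a non-negative integer: the residue computation, the $O(T^{\ell-k-1})$ decay of the horizontal segments, the exponential bound $e^{-\eta x}$ on the shifted vertical line, and the Cauchy estimate controlling $E_x$ all go through as you describe.

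The gap is in the non-integer case. With the principal branch, the cut of $w^{-(k+1)}$ lies along $(-\infty,0]$, so the line $\re w=-\eta$ meets it at $w=-\eta$; your displayed identity after ``Subtracting yields'' is then not well-posed, and there are no ``lower-order residue terms'' $E_x(s)$ to speak of, since $w=0$ is a branch point rather than a pole. A Hankel deformation does work, but the contour must consist of the two half-lines $\{-\eta+it:\pm t>0\}$ together with a keyhole around the segment $[-\eta,0]$; after the substitution $w=v/x$ the keyhole piece tends to $f(u+s)$ via the Hankel integral you quote, yet the remainder is \emph{not} of the form $\sum_{j\ge1}c_j x^{-j}f^{(j)}(u+s)$ and requires its own dominated-convergence argument uniform in $s$. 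You gesture at this but do not carry it out, and the displayed formula as written is misleading for non-integer $k$. A cleaner alternative that sidesteps the branch cut entirely is to take $c=1/x$ in $(\ast)$ and substitute $w=(1+it)/x$, reducing everything to dominated convergence on a fixed contour --- this is precisely the manipulation leading to Remark~\ref{perronformula1} and equation~\eqref{Fouriervalue} in the paper, and it handles all $k>\ell$ uniformly.
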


The central question we intend to study in this article then is, to which extent functions in $H_{\infty,\ell}^\lambda~[\re >0]$ are Riesz summable on the  imaginary line.

\smallskip

Let us sketch the main results we establish by fixing  a frequency $\lambda$.  We in Remark~\ref{berlin} observe that, whenever the Riesz limit of a Dirichlet series $D$ exists at some point $i\tau_{0} \in i\mathbb{R}$, and so the limit function $g$ of $D$ defines a holomorphic function on $[Re>0]$, then necessarily
\begin{equation*}
\lim_{x\to \infty} R_{x}^{\lambda,k}(D)(i\tau_{0})=\lim_{u\to 0}g(u+i\tau_{0}).
\end{equation*}
Hence, one of the main properties of functions $f\in H_{\infty,\ell}^{\lambda}[\re > 0]$ we take advantages of,  is given by the fact that the horizontal limits
\begin{equation}\label{horizontalintro}
f^{*}(it)=\lim_{u\to 0} f(u+it)
\end{equation}
exist for almost every $t\in \R$ and define a measurable function (see Proposition~\ref{existencehorizontalB} and Corollary~\ref{existencehorizontalA}). The proof needs Fatou's famous theorem on
boundary limits (within Stolz regions) of bounded holomorphic function on the disc $\mathbb{D}$.
For these horizontal limits we then in Theorem~\ref{AE} prove that  for all $k>\ell\ge 0$ and
for almost all $\tau\in \R$
\begin{equation*}
f^{*}(i\tau) = \lim_{x\to \infty}R_{x}^{\lambda,k}(f)(i\tau)\,.
\end{equation*}
In Section~\ref{AppendixB} (see also Corollary \ref{convolution1NEW}) we provide an internal characterization  of the closed subspace of $L_{\infty}(\R)$ of
all horizontal limits $f^{*}$ generated by functions $f\in H_{\infty,\ell}^{\lambda}[\re > 0]$.
The main tool for all this
 is a  far-reaching Perron-type formula for such horizontal limits (Theorem~\ref{rasmus}).
Elaborating these almost everywhere results, we show in Theorem~\ref{mainresult1} that, if $f^{*}$ is continuous on some open interval $I\subset [\re = 0]$, then for all $i\tau \in I$
\begin{equation} \label{Theorem5.1Intro}
\lim_{x\to \infty} R_{x}^{\lambda,k}(f)(i\tau)=f^{*}(i\tau)\,,
\end{equation}
with uniform convergence on every closed subinterval $J\subset I$. Even more, the sequence $(R_{x}^{\lambda,k}(f))_{x>0}$ converges uniformly on all 'flattened cones', which include all rectangles of the form $[0,\sigma] + J$, $\sigma>0$.
In Theorem~\ref{principleofloc2}
we show  a principle of localization: Assuming that $g$ is another function in $H_{\infty,\ell}^{\lambda}[\re > 0]$ such  that $f^{*}=g^{*}$ on some open interval $I\subset [\re =0]$, we prove that for all $i\tau \in I$
\[
\text{$\lim_{x\to \infty}R_{x}^{\lambda,k}(f)(i\tau)$ exists if and only if $\lim_{x\to \infty}R_{x}^{\lambda,k}(g)(i\tau)$ exists\,,}
\]
 and in this case
 \begin{equation*}
 \lim_{x\to \infty}R_{x}^{\lambda,k}(f)(i\tau)=\lim_{x\to \infty}R_{x}^{\lambda,k}(g)(i\tau).
 \end{equation*}
We finish with a Dini test in Theorem~\ref{Dinitype1}:
If for $\tau \in \R$ there is $\delta>0$ such that
 \begin{equation*}
 \int_{-\delta}^{\delta} \frac{|f^{*}(i(y+\tau))-f^{*}(i\tau)|}{|y|^{1+k-\ell}} dy<\infty,
 \end{equation*}
then
 \begin{equation*}
 \lim_{x\to \infty} R_{x}^{\lambda,k}(f)(i\tau)=f^{*}(i\tau).
 \end{equation*}

We note that Theorem~\ref{Theo42} in \cite{HardyRiesz} is stated without proof.
Since the original article \cite{Riesz}  of M. Riesz  from 1909 is not easily accessible, we at the end of our article (Section \ref{Appendix2}) provide a full proof of Theorem~\ref{Theo42}.

\smallskip

Eventually, we comment how Theorem~\ref{Theo42} and the main contributions of this article are related to each other.
First, observe that  a function $f\in H_{\infty,\ell}^{\lambda}[\re > 0]$, which can be  continuously extended to all of $[\re \ge  0]$
with the exception of a finite number of points on the boundary line $[\re =  0]$,
never can have  poles at these points (see again \eqref{norway}). So for example the result from \eqref{zetapole}, being a consequence of Theorem~\ref{Theo42}, can not be derived from \eqref{Theorem5.1Intro} (Theorem~\ref{mainresult1}). On the other hand, we hope to convince our reader that focusing on  functions from $H_{\infty,\ell}^{\lambda}[\re >0]$, leads to far more  knowledge which can not be reached under the
restrictions assumed in  Theorem~\ref{Theo42}.

\section{Horizontal limits} \label{basics}

As mentioned in the introduction for $f\in H_{\infty,\ell}^{\lambda}[\re > 0]$ we define the measurable function
  \begin{equation*}
f^{*}: i \mathbb{R} \to \mathbb{C} \,,\,\,\,\,  f^{*}(it)=
\begin{cases} \,\,\lim_{\varepsilon\to 0} f(\varepsilon+it) & \text{ the limit exists }
\\[2ex]
\,\, \,0& \text{ else\,, }
\end{cases}
\end{equation*}
and call it the horizontal limit function of $f$. The purpose of this section is to ensure that this definition indeed is reasonable (see Corollary \ref{existencehorizontalA}). This fact  is based on the following seemingly well-known consequence of
Fatou's theorem on non-radial limits of holomorphic functions on the open unit disc $\mathbb{D}$ (see e.g.  \cite[Lemma 11.22]{defant2018Dirichlet}):

Given a  bounded and holomorphic function  $f: [\re > 0] \to \mathbb{C}$, for almost every $t \in \R$ the horizontal limit
\begin{equation*}
f^{*}(it)= \lim_{\varepsilon\to 0} f(\varepsilon+i t)
\end{equation*}
exists.

In fact, we in the following need a variant of this result, namely the following improvement.

\begin{Prop}\label{existencehorizontalB} Let $f: [\re > 0] \to \mathbb{C}$ be bounded and holomorphic. Then there
is a null set $E$ in $i\mathbb{R}$  such that for all $t \in i\mathbb{R} \setminus E$ and all $y \in \mathbb{R}$ we have that
\begin{equation} \label{weber1}
f^{*}(it) =\lim_{\varepsilon\to 0} f( \varepsilon +i\varepsilon y+it).
\end{equation}
exists.
\end{Prop}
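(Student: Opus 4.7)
The plan is to transport the problem from the right half-plane to the unit disc via the Cayley transform and invoke the non-tangential form of Fatou's theorem. For fixed $y \in \R$, the point $\varepsilon + i\varepsilon y + it = it + \varepsilon(1+iy)$ traces a ray inside $[\re > 0]$ terminating at the boundary point $it$, whose direction makes angle $\arctan y \in (-\pi/2, \pi/2)$ with the inner normal. Hence the approach is non-tangential, lying inside any Stolz region at $it$ with opening exceeding $2|\arctan y|$. The crucial feature of the conclusion is the order of quantifiers: a \emph{single} exceptional set $E$ must work uniformly in all $y$, which is precisely what the non-tangential version of Fatou's theorem provides.

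Concretely, I would apply the Cayley transform $\varphi(z) = (1-z)/(1+z)$, a conformal bijection from $\D$ onto $[\re > 0]$ whose boundary restriction is a smooth diffeomorphism from $\T \setminus \{-1\}$ onto $i\R$. The composition $g := f \circ \varphi$ is bounded and holomorphic on $\D$, so by Fatou's theorem (in its non-tangential form) there is a null set $E' \subset \T$ such that for every $\zeta \in \T \setminus E'$ the function $g$ admits a non-tangential limit $g^*(\zeta)$, meaning that the limit along every approach within a fixed Stolz region at $\zeta$ exists and yields the same value.

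Setting $E := \{t \in \R : \varphi^{-1}(it) \in E'\}$, which is a Lebesgue null set in $\R$ since the boundary map is bi-Lipschitz on any bounded subarc of $\T\setminus\{-1\}$, I would then check that for $t \notin E$ and arbitrary $y \in \R$ the pulled-back curve $\varepsilon \mapsto \varphi^{-1}(it + \varepsilon(1+iy))$ approaches $\zeta := \varphi^{-1}(it)$ non-tangentially in $\D$. This yields
$$\lim_{\varepsilon \to 0^+} f(\varepsilon + i\varepsilon y + it) = g^*(\zeta),$$
a value independent of $y$, which in particular coincides with $f^*(it)$ (the case $y=0$).

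The main technical step will be confirming that the pullback of the ray under $\varphi^{-1}$ remains non-tangential at $\zeta$; this reduces to the conformality of $\varphi$ at the interior boundary point $it$ and the fact that a smooth conformal map between domains with smooth boundaries sends Stolz cones into Stolz cones near regular boundary points. It can be handled either by a direct calculation with the explicit Möbius formula for $\varphi^{-1}$, or more conceptually by noting that $\varphi'(\zeta) \neq 0, \infty$ at such a boundary point, so $\varphi^{-1}$ rotates and scales neighborhoods of $it$ onto neighborhoods of $\zeta$ with uniformly bounded distortion of the relevant angles; any Stolz cone on the half-plane side is therefore mapped into a (possibly wider, but still proper) Stolz cone on the disc side.
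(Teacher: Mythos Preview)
Your proposal is correct and follows essentially the same route as the paper: transfer to $\D$ via the Cayley map and invoke non-tangential Fatou, then verify that the pullback of the ray $\varepsilon\mapsto it+\varepsilon(1+iy)$ lands in a Stolz region at $\varphi^{-1}(it)$. The only differences are in packaging: the paper carries out the Stolz-region check by an explicit computation showing $\frac{|\varphi^{-1}(\varepsilon+i\varepsilon y+it)-\varphi^{-1}(it)|}{1-|\varphi^{-1}(\varepsilon+i\varepsilon y+it)|}\to|1+iy|$, and it builds the single exceptional set by hand from the nested per-aperture null sets $N(\alpha)$ via a countable union over integer apertures, whereas you invoke the already-packaged strong form of Fatou's theorem.
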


In order to verify   \eqref{weber1} we have to take a deeper look into the proof of \cite[Lemma 11.22]{defant2018Dirichlet},
which basically relies on a classical  improvement of Fatou's theorem showing that bounded holomorphic functions on $\mathbb{D}$ not only have radial limits almost everywhere -- but even boundary limits almost everywhere within
so-called Stolz regions.

To do this, let  $\varphi$ be the Cayley transformation, i.e.
\begin{equation*}
\varphi\colon \overline{\mathbb{D}} \setminus \{1\} \to [\re \ge  0]\,,\,\, \,\, \varphi(z)=\frac{1+z}{1-z}\,,
\end{equation*}
with its inverse
\begin{equation*}
\varphi^{-1}\colon [\re \ge  0] \to  \overline{\mathbb{D}} \setminus \{1\}  \,,\,\,\,\, \varphi^{-1}(s)=\frac{s-1}{s+1}\,.
\end{equation*}
Fix some bounded and holomorphic function $f: [\re > 0] \to \mathbb{C}$, and
 define  for every  $\alpha > 1$ the set
\[
N(\alpha)= \big\{w \in \mathbb{T} \colon  \lim_{\substack{z\in S(\alpha,w)\\ z\to w}} f(\varphi(z))
\,\,\, \text{does not exist}\,\big\}\,,
\]
 where
 \begin{equation*}
 S(\alpha,w)=\{z \in \mathbb{D} : |z-w|\le \alpha(1-|z|)\}
 \end{equation*}
is the so-called Stolz region with respect to $w$
 and $\alpha$.
 Then by the mentioned variant of  Fatou's theorem (see e.g. \cite[Section 23]{defant2018Dirichlet}) we know that
 $N(\alpha)$ for every $\alpha >1$ is a null set in $\mathbb{T}$\,. Moreover, we have that
  \begin{equation}\label{ok}
   \text{$N(\alpha) \subset N(\beta)$ for every choice of $1 < \alpha < \beta$},
 \end{equation}
since  $S(\alpha,w) \subset S(\beta,w)$.

\begin{proof}[Proof of Proposition~\ref{existencehorizontalB}] We first show that for every  $y \in \mathbb{R}$, every $\alpha > |1+iy|$,
and every $ t \in  \mathbb{R}$
\begin{equation}\label{stolz}
\exists~ \varepsilon_{0}~ \forall~ \varepsilon < \varepsilon_{0}\colon\varphi^{-1}(\varepsilon+iy\varepsilon+it )\in S(\alpha, \varphi^{-1}(it )).
\end{equation}
Indeed, for every $\varepsilon > 0$
\begin{align*}
&\frac{|\varphi^{-1}(\varepsilon+iy\varepsilon+it )-\varphi^{-1}(it )|}{1-|\varphi^{-1}(\varepsilon+iy\varepsilon+it )|}=\frac{|\frac{\varepsilon+iy\varepsilon+it -1}{\varepsilon+iy\varepsilon+it +1}-\frac{it -1}{it +1}|}{1-|\frac{\varepsilon+iy\varepsilon+it -1}{\varepsilon+iy\varepsilon+it +1}|}
\\ &
=\frac{1}{|it +1|}\frac{|(\varepsilon+iy\varepsilon+it -1)(it +1)-(it -1)(\varepsilon+iy\varepsilon+it +1)|}{|\varepsilon+iy\varepsilon+it +1|-|(\varepsilon+iy\varepsilon+it -1)|} \,=:A_{\varepsilon}(t,y)\,,
\end{align*}
and we claim that $\lim_{\varepsilon\to 0} A_{\varepsilon}(t,y)=|1+iy|$.
Calculating  the numerator
\begin{align*}
|(\varepsilon+iy\varepsilon+it -1)(it +1)&-(it -1)(\varepsilon+iy\varepsilon+it +1)|\\ & =|-2it  +2(\varepsilon+iy\varepsilon+it )|=2|\varepsilon+iy\varepsilon|\,,
\end{align*}
 gives
\begin{equation*}
A_{\varepsilon}(t,y)=\frac{2|\varepsilon+iy\varepsilon|}{|it +1|} \frac{1}{|\varepsilon+iy\varepsilon+it +1|-|(\varepsilon+iy\varepsilon+it -1)|}.
\end{equation*}
We extend the fraction and obtain
\begin{align*}
A_{\varepsilon}(t,y)
=\frac{|1+iy|}{2|1+it |} \big( |\varepsilon+iy\varepsilon+it +1|+|(\varepsilon+iy\varepsilon+it -1)|\big),
\end{align*}
since after multiplication
\begin{align*}
&\big(|\varepsilon+iy\varepsilon+it +1|-|(\varepsilon+iy\varepsilon+it -1)|\big)
\\ & \cdot
\big(|\varepsilon+iy\varepsilon+it +1|+|(\varepsilon+iy\varepsilon+it -1)|\big)
 \\ &
 =|\varepsilon+iy\varepsilon+it +1|^{2}-|(\varepsilon+iy\varepsilon+it -1)|^{2}
  \\ &=(1+\varepsilon)^{2}-(1-\varepsilon)^{2}
=1+2\varepsilon+\varepsilon^2-(1-2\varepsilon+\varepsilon^2)=4\varepsilon\,.
\end{align*}
Consequently, $A_{\varepsilon}(t,y)$ tends to
$|1+iy|$
as $\varepsilon  \to 0$, and this  completes the proof of~\eqref{stolz}.
Now define for every $y \in \mathbb{R}$
\[
\alpha_y : = |1+iy| +1\,,
\]
as well as
\[
\text{$E(y) := \varphi(N(\alpha_y)) \subset i\mathbb{R}$ \,\,\,\,and \,\,\,\,$E: = \bigcup_{y \in \mathbb{R}}  E(y)\subset i\mathbb{R}$}\,.
\]
Note that by \eqref{ok} we have
\begin{equation}\label{okok}
   \text{$E(y_1) \subset E(y_2)$ for every choice of $y_1 < y_2$}\,,
 \end{equation}
and we claim that  this fact shows that $E$ is a null set in $i \mathbb{R}$.
Indeed, for every $ y \in \mathbb{R}$ there is  $n \in \mathbb{N}$ such that  $|y| < n$, and hence by \eqref{okok}
\[
E = \bigcup_{y \in \mathbb{R}}  E(y)\subset \bigcup_{n \in \mathbb{N}} E(n)\,.\
\]
But since the latter set is a countable union of null sets, the claim follows.
Now  for every $it \in i\mathbb{R} \setminus E$ we  for all $y \in \mathbb{R}$ have that
\begin{equation}\label{okokok}
  \lim_{\substack{z\in S(\alpha_y, \varphi^{-1}(it))\\ z\to  \varphi^{-1}(it)}} g(\varphi(z)) \,\,\,\, \text{exists\,.}
\end{equation}
By \eqref{stolz}  we know that for all  $it \in i \mathbb{R}$ and all $y \in \mathbb{R}$
\begin{equation*}
  \varphi^{-1}(\varepsilon+i\varepsilon y+it) \in S(\alpha_y, \varphi^{-1}(it))\,,
\end{equation*}
whenever $\varepsilon$ is small enough.
Consequently, we deduce from \eqref{okokok} and the fact that
by continuity
\begin{equation*}
 \lim_{\varepsilon \to 0}\varphi^{-1}(\varepsilon+iy\varepsilon+it )=\varphi^{-1}(it )\,,
\end{equation*}
 that   for every $it \in i\mathbb{R} \setminus E$ and every  $y \in \mathbb{R}$
\begin{align*}
   \lim_{\varepsilon \to 0} f(\varepsilon+i\varepsilon y+it)
     \,\,\,\, \text{exists\,.}
\end{align*}
On the other hand, again by \eqref{stolz}, for all  $t \in i \mathbb{R}$  and all $y \in \mathbb{R}$
\begin{equation*}
  \varphi^{-1}(\varepsilon+it) \in S(\alpha_0, \varphi^{-1}(it)) \subset S(\alpha_y, \varphi^{-1}(it))\,,
\end{equation*}
so that another application of \eqref{okokok} assures  that   for every $it \in i\mathbb{R} \setminus E$ and every  $y \in \mathbb{R}$
\begin{equation*}
  \lim_{\varepsilon \to 0} f(\varepsilon+it) = \lim_{\varepsilon \to 0} f(\varepsilon+i\varepsilon y+it)\,.
\end{equation*}
This finishes the proof.
\end{proof}

Proposition~\ref{existencehorizontalB} easily transfers to functions in $H_{\infty,\ell}^{\lambda}[\re > 0]$, which are not
necessarily bounded.

\begin{Coro}\label{existencehorizontalA} Let $\ell\ge 0$ and $f\in H_{\infty,\ell}^{\lambda}[\re > 0]$. Then for almost every $t \in \R$ the horizontal limit
\begin{equation*}
f^{*}(it)= \lim_{\varepsilon\to 0} f(\varepsilon+i t)
\end{equation*}
exists. More generally,  there
is a null set $E$ in $i\mathbb{R}$  such that for all $t \in \mathbb{R} \setminus E$ and all $y \in \mathbb{R}$ we have that
\begin{equation*} \label{weber}
f^{*}(it) =\lim_{\varepsilon\to 0} f( \varepsilon +i\varepsilon y+it).
\end{equation*}
\end{Coro}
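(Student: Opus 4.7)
The plan is to reduce the assertion to Proposition~\ref{existencehorizontalB} by factoring off the polynomial growth of $f$. Since $\re(1+s)>1$ on $[\re>0]$, the principal branch of $\log(1+s)$ is holomorphic there, so
$$g(s):=\frac{f(s)}{(1+s)^{\ell}}=f(s)\exp\bigl(-\ell\log(1+s)\bigr),\quad s\in[\re>0],$$
defines a holomorphic function on the right half-plane.

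Next I would check that $g$ is bounded. On $[\re>0]$ one has $|1+s|\ge 1$, hence $|s|\le |1+s|+1\le 2|1+s|$, so $(1+|s|)^{\ell}\le 3^{\ell}|1+s|^{\ell}$. Combined with the growth condition \eqref{norway} for $f$ this yields $|g(s)|\le 3^{\ell}\|f\|_{\infty,\ell}$, so that $g$ is a bounded holomorphic function on $[\re>0]$.

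Now I would apply Proposition~\ref{existencehorizontalB} to $g$: this supplies a null set $E\subset i\R$ such that for every $it\in i\R\setminus E$ and every $y\in\R$, the limit $\lim_{\varepsilon\to 0}g(\varepsilon+i\varepsilon y+it)$ exists. Since $f(s)=g(s)(1+s)^{\ell}$ and the map $s\mapsto(1+s)^{\ell}$ is continuous at $s=it$ with value $(1+it)^{\ell}$, it follows that
$$\lim_{\varepsilon\to 0}f(\varepsilon+i\varepsilon y+it)=(1+it)^{\ell}\lim_{\varepsilon\to 0}g(\varepsilon+i\varepsilon y+it)$$
exists for every $it\in i\R\setminus E$ and every $y\in\R$. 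Taking $y=0$ gives the first assertion, and the general form is then immediate.

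No real obstacle is expected; the only point deserving care is the definition and holomorphy of $(1+s)^{\ell}$ for non-integer $\ell$, which is unproblematic here because $1+s$ takes values in $[\re>1]$ and hence lies safely inside the domain of the principal logarithm.
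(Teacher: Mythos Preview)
Your proof is correct and follows exactly the same route as the paper, which simply says to apply Proposition~\ref{existencehorizontalB} to the bounded holomorphic function $g(s)=f(s)(1+s)^{-\ell}$. You have merely spelled out the boundedness estimate and the well-definedness of $(1+s)^{\ell}$ for non-integer $\ell$, both of which the paper leaves implicit.
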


\begin{proof}
  The argument is immediate --  apply Proposition~\ref{existencehorizontalB} to the bounded and holomorphic function
$g(s)=f(s)(1+s)^{-\ell}, \, s \in [\re >0]$.
\end{proof}

\section{Convolution with weighted horizontal limits}

For functions $f\in H^{\lambda}_{\infty,\ell}[\re > 0]$ the following convolution formula in Theorem \ref{convolution1} is crucial for the forthcoming sections. In fact, it is  the central tool to establish a Perron-type representation of Riesz means in terms of the horizontal limit functions $f^{\ast}$ of $f$ (Theorem \ref{rasmus}). Therefore, we recall that for $u>0$ the classical
 Poisson kernel $P_{u}$ is given by
 \begin{equation} \label{defPoissonkernel}
 P_{u}(t)=\frac{1}{\pi}\frac{u}{u^{2}+t^{2}} \colon \mathbb{R} \to \mathbb{R}\,,
 \end{equation}
and satisfies $\|P_{u}\|_{L_{1}(\R)}=1$ with Fourier transform
\begin{equation} \label{fourier}
\text{$\widehat{P_{u}}(x)=e^{-u|x|}$ \,\,for all
$x\in \mathbb{R}$.}
\end{equation}

\begin{Theo} \label{convolution1} Let $\ell\ge 0$ and $f\in H^{\lambda}_{\infty,\ell}[\re > 0]$.
Then for every $u+i\tau\in [\re > 0]$
\begin{equation} \label{isom}
\Big[\frac{f^\ast(i\cdot)}{(1+i\cdot)^{\ell}}*P_{u}\Big](\tau)=\frac{f(u+i\tau)}{(1+u+i\tau)^{\ell}}.
\end{equation}
\end{Theo}

\begin{proof}
 We start showing that for all $\varepsilon,u>0$ and all $\tau \in \mathbb{R}$
\begin{equation}\label{tuesday}
\frac{f(u+\varepsilon+i\tau)}{(1+u+i\tau)^{\ell}}
=
\Big[\frac{f(\varepsilon+i\cdot)}{(1+i\cdot)^{\ell}}*P_{u}\Big](\tau).
\end{equation}
If then $\varepsilon\to 0$, the conclusion follows by continuity, the dominated convergence theorem and the observation that $\frac{f^\ast(i\cdot)}{(1+i\cdot)^{\ell}}\in L_{\infty}(\R)$ with
\begin{equation} \label{contiJohanna}
\|\frac{f^\ast(i\cdot)}{(1+i\cdot)^{\ell}}\|_{\infty}\le \|f\|_{\infty,\ell},
\end{equation}
where the latter is valid, since for fixed and admissible $t\in \R$
\begin{equation*}
|\frac{f^\ast(it)}{(1+it)^{\ell}}| = \lim_{\varepsilon \to 0} |\frac{f(\varepsilon+it)}{(1+it)^{\ell}}| \le \|f\|_{\infty,\ell} \lim_{\varepsilon \to 0} \frac{|1+\varepsilon+it|^{\ell}}{|1+it|^{\ell}} = \|f\|_{\infty,\ell},
\end{equation*}
Note first, that looking at Theorem~\ref{41}, it suffices to check \eqref{tuesday} only for $f(s)=e^{-sx}$ with $x\ge 0$. To do so, we recall (see e.g. \cite[Remark~2.10]{DefantSchoolmann6}) that for all $\ell>0$ and $s\in [\re > 0]$
\begin{equation*}
\frac{\Gamma(\ell)}{s^{\ell}}=\mathcal{L}(t^{\ell-1})(s)=\int_{0}^{\infty} e^{-st} t^{\ell-1} dt,
\end{equation*}
where $\mathcal{L}$ denotes the Laplace transform.
Together with \eqref{fourier} we obtain
\begin{align*}
\Gamma(\ell)\frac{f(u+\varepsilon+i\tau)}{(1+u+i\tau)^{\ell}}
&
=\Gamma(\ell)\frac{e^{-(u+\varepsilon+i\tau)x}}{(1+u+i\tau)^{\ell}}
\\&
=e^{-(u+\varepsilon+i\tau)x}\mathcal{L}(t^{\ell-1})(1+u+i\tau)
\\&
=
\int_{0}^{\infty} e^{-(u+\varepsilon+i\tau)x} e^{-(1+u+i\tau)t} t^{\ell-1} dt
\\&
=e^{-(\varepsilon+i\tau)x}\int_{0}^{\infty} e^{-u(t+x)} e^{-(1+i\tau)t} t^{\ell-1}dt\\&=e^{-(\varepsilon+i\tau)x} \int_{0}^{\infty} \int_{\R} P_{u}(y) e^{iy(t+x)} dy\, e^{-(1+i\tau)t} t^{\ell-1}dt
\\&
=e^{-(\varepsilon+i\tau)x} \int_{\R} P_{u}(y) e^{iyx}\int_{0}^{\infty}e^{-(1+i(\tau-y))t} t^{\ell-1} dt dy\\&=e^{-(\varepsilon+i\tau)x} \int_{\R} P_{u}(y) e^{iyx} \mathcal{L}(t^{\ell-1})(1+i(\tau-y)) dy\\&=e^{-(\varepsilon+i\tau)x} \int_{\R} P_{u}(y) e^{iyx} \frac{\Gamma(\ell)}{(1+i(\tau-y))^{\ell}} dy
\\&
=\Gamma(\ell)\int_{\R} P_{u}(y) \frac{e^{-(\varepsilon+i(\tau-y))x}}{(1+i(\tau-y))^{\ell}} dy
=
\Gamma(\ell)\Big[\frac{f(\varepsilon+i\cdot)}{(1+i\cdot)^{\ell}}*P_{u}\Big](\tau),&
\end{align*}
which finishes the argument dividing both sides by $\Gamma(\ell)$.
\end{proof}

\begin{Coro} \label{convolution1NEW} Let $\ell\ge 0$. Then the mapping
\begin{equation} \label{LinfinitymappingNEW}
T \colon H_{\infty,\ell}^{\lambda}[\re > 0]\hookrightarrow L_{\infty}(\R), ~~f \mapsto\frac{f^\ast(i\cdot)}{(1+i\cdot)^{\ell}}
\end{equation}
defines an isometric embedding.
\end{Coro}
\begin{proof}
As by \eqref{contiJohanna} and Theorem \ref{convolution1} we already know that $T$ is continuous and injective, it remains to show that $T$ is isometric. Indeed, applying \cite[Theorem 3.7]{DefantSchoolmann6}, equation \eqref{isom} and the fact that the convolution of $L_{\infty}(\R)$ and $L_{1}(\R)$ functions is continuous, we obtain
\begin{align*}
\|f\|_{\infty,\ell} & = \lim_{u\to 0} \sup_{\tau \in \R}|\frac{f(u+i\tau)}{(1+u+i\tau)^{\ell}}|  = \lim_{u\to 0} \sup_{\tau \in \R}|\Big[\frac{f^\ast(i\cdot)}{(1+i\cdot)^{\ell}}*P_{u}\Big](\tau)| \\ & = \lim_{u\to 0} \|\Big[\frac{f^\ast(i\cdot)}{(1+i\cdot)^{\ell}}*P_{u}\Big]\|_{L_{\infty}(\R)} \le \sup_{u>0}  \|\Big[\frac{f^\ast(i\cdot)}{(1+i\cdot)^{\ell}}*P_{u}\Big]\|_{L_{\infty}(\R)}\\ & \le \sup_{u>0}  \|\frac{f^\ast(i\cdot)}{(1+i\cdot)^{\ell}}\|_{L_{\infty}(\R)} \|P_{u}\|_{L_{1}(\R)} \le \|\frac{f^\ast(i\cdot)}{(1+i\cdot)^{\ell}}\|_{L_{\infty}(\R)}. \qedhere
\end{align*}
\end{proof}

We refer  to Section \ref{AppendixB} from our   appendix, where  we for the sake of completeness in Theorem~\ref{svwhsv} give an internal description of the range of the operator in  \eqref{LinfinitymappingNEW}. For the  particular  case $\ell=0$ we in Theorem~\ref{casezero} show that a function $g\in L_{\infty}(\R)$ belongs to the image of $T$ if and only if the continuous function $g*P_{u}$ is almost periodic for every $u>0$ and the Bohr coefficients

\begin{equation*}
  a_{x}(g*P_{u})=\lim_{T\to \infty} \frac{1}{2T} \int_{-T}^{T} [g*P_{u}](t) e^{ixt} dt, \quad x\in \R,
\end{equation*}
vanish, whenever $x\notin \{\lambda_{n} \mid n\in \N\}$.

\section{Perron's formula in terms of horizontal limits} \label{preparation}
Given a frequency $\lambda$, some $\ell \ge 0$ and $f\in H_{\infty,\ell}^{\lambda}[\re > 0]$, the aim  of this section is to prove an integral formula for the Riesz means $R_{x}^{\lambda,k}(f)$ in terms of the horizontal limit function $f^\ast$, whenever $k>\ell$. Later we are going to see that this integral
description incorporates most of the information we need for the understanding of Riesz summation on the imaginary line.

The following  Perron-type formula  is an indispensable tool  from \cite[Theorem~3.5]{DefantSchoolmann6},
which in fact up to some point rules the structure theory of the scale of  Banach spaces
$H^{\lambda}_{\infty,\ell}[\re >0]$, $\ell\ge 0$.

\begin{Theo} \label{new-ban}
 Let  $f \in H^{\lambda}_{\infty,\ell}[\re >0]$ and $k >\ell\ge 0$.
 Then for all  $s_{0}\in [\re \ge 0]$, $x>0$ and $c >0$
 \begin{equation*}
 R_{x}^{\lambda,k}(f)(s_{0}) = \frac{\Gamma(1+k)}{2\pi i} x^{-k} \int_{c-i\infty}^{c+i\infty} \frac{f(s+s_{0})}{s^{1+k}} e^{xs} ds.
 \end{equation*}
\end{Theo}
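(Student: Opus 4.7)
The plan is to first verify the identity for Dirichlet polynomials via direct termwise integration, and then to upgrade to general $f\in H^\lambda_{\infty,\ell}[\re>0]$ by approximating $f$ with its own Riesz means through Theorem~\ref{41}.

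The engine is the classical Mellin inversion identity
\begin{equation*}
\frac{1}{2\pi i}\int_{c-i\infty}^{c+i\infty}\frac{e^{ws}}{s^{1+k}}\,ds \,=\, \begin{cases}\dfrac{w^k}{\Gamma(1+k)} & w>0,\\[1ex] 0 & w\le 0,\end{cases}
\end{equation*}
which I would derive by shifting the contour to the right (for $w\le 0$, using decay of $e^{ws}$) and, for $w>0$, by a Hankel-contour collapse around $0$ combined with the Laplace representation $\Gamma(1+k)/s^{1+k}=\int_0^\infty e^{-st}t^k\,dt$. Applying this termwise to a Dirichlet polynomial $P(s)=\sum_{n=1}^N a_n e^{-\lambda_n s}$ with $w=x-\lambda_n$, only the indices $\lambda_n<x$ survive, and after factoring out $x^k/\Gamma(1+k)$ one recovers exactly the asserted formula for $P$ at every $s_0\in[\re\ge 0]$ and every $c>0$.

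To pass to general $f$, I would apply the polynomial identity to the Dirichlet polynomials $P_y:=R_y^{\lambda,k}(f)$ and let $y\to\infty$. On the left, $R_x^{\lambda,k}(P_y)(s_0)$ is a finite weighted sum whose weights $(1-\lambda_n/y)^k(1-\lambda_n/x)^k\,a_n(f)\,e^{-\lambda_n s_0}$ plainly converge to those of $R_x^{\lambda,k}(f)(s_0)$. On the right, Theorem~\ref{41} applied at some shift $u\in(0,\,c+\re s_0)$ provides $\|R_y^{\lambda,k}(f)(u+\pmb{\cdot})-f(u+\pmb{\cdot})\|_{\infty,\ell}\to 0$, which after a further translation yields uniform smallness of $|P_y(s+s_0)-f(s+s_0)|/(1+|s+s_0|)^\ell$ on the vertical line $\re s=c$. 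Dominated convergence then closes the argument, the $y$-uniform envelope $C\,(1+|s+s_0|)^\ell/|s|^{1+k}$ being integrable on $c+i\R$ precisely because $k>\ell$.

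The main obstacle, I expect, is this last limit passage on the right-hand integral: the strict inequality $k>\ell$ must be converted into an honest $L^1$-majorant on $c+i\R$ that is uniform in $y$. This is exactly where the growth exponent defining $H^\lambda_{\infty,\ell}[\re>0]$ interacts with the Riesz exponent $k$, and where the \emph{norm} convergence furnished by Theorem~\ref{41} (as opposed to mere pointwise convergence) is what does the work. The case $\ell=0$ reduces to a familiar Perron-type identity; the real novelty here is that, since the vertical contour $c+i\R+s_0$ sits inside $[\re>0]$ for every $s_0\in[\re\ge 0]$, the formula automatically extends up to and including boundary points of $s_0$.
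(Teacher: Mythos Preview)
The paper does not actually prove this theorem; it quotes it from \cite[Theorem~4.5]{DefantSchoolmann6}, so there is no in-text proof to compare against.

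Your argument is mechanically sound: the Mellin identity and the termwise computation for Dirichlet polynomials are correct, and the dominated-convergence passage to general $f$ works exactly as you describe, since Theorem~\ref{41} supplies norm convergence of $R_y^{\lambda,k}(f)(u+\pmb{\cdot})$ to $f(u+\pmb{\cdot})$ and hence a $y$-uniform envelope $C(1+|s+s_0|)^\ell/|s|^{1+k}$ on the line $\re s=c$, integrable precisely because $k>\ell$.

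The genuine concern is circularity. Both Theorem~\ref{new-ban} and Theorem~\ref{41} are imported from the companion paper \cite{DefantSchoolmann6}, where they appear as Theorem~4.5 and Theorem~4.7 respectively. The present article explicitly describes the Perron formula (their Theorem~4.5) as ``an indispensable tool \dots\ which in fact up to some point rules the structure theory'' of the spaces $H^\lambda_{\infty,\ell}[\re>0]$, and the uniform approximation theorem (their Theorem~4.7) is one of those structural consequences. In other words, in the source Theorem~\ref{41} is derived \emph{from} Theorem~\ref{new-ban}, so your reduction runs the logic backwards. A non-circular route would start from the classical Perron formula for somewhere $(\lambda,k)$-Riesz summable series (quoted in the Appendix as~\eqref{Perronsformula2}, valid for $c$ larger than the abscissa of Riesz summability) and then shift the vertical contour down to arbitrary $c>0$ via Cauchy's theorem, using directly the growth bound $|f(s)|\le\|f\|_{\infty,\ell}(1+|s|)^\ell$ rather than appealing to Theorem~\ref{41}.
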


We start modifying this formula. First, observe that regarding summation on the imaginary line by translation it suffices to handle the case $s_{0}=0$. Then, the choice $c=x^{-1}$ leads to
\begin{align*}
R_{x}^{\lambda,k}(f)(0)&=\frac{\Gamma(1+k)}{2\pi i} x^{-k}\int_{x^{-1}-i\infty}^{x^{-1}+i\infty} \frac{f(s)}{s^{1+k}} e^{xs} ds\\ &=\frac{\Gamma(1+k)}{2\pi }  x^{-k} \int_{-\infty}^{\infty} \frac{f(x^{-1}+it)}{(x^{-1}+it)^{1+k}}e^{x(x^{-1}+it)} dt\\ &=\frac{\Gamma(1+k)e}{2\pi } \int_{-\infty}^{\infty} f(x^{-1}+it) e^{ixt} \frac{x}{(1+ixt)^{1+k}} dt.
\end{align*}

We fix this observation.

\begin{Rema}\label{perronformula1} Let $k>\ell \ge 0$ and $f\in H_{\infty,\ell}^{\lambda}[\re > 0]$. Then for all  $x>0$
\begin{equation*}
R_{x}^{\lambda,k}(f)(0)=\int_{-\infty}^{\infty} f(x^{-1}+it) e^{ixt} K_{x}^{k}(t) dt,
\end{equation*}
where
\begin{equation*}
K_{x}^{k}(t)=\frac{\Gamma(1+k) e}{2\pi } \frac{x}{(1+ixt)^{1+k}}\,,\,\, \,t \in \mathbb{R}.
\end{equation*}
\end{Rema}

The functions $(K_{x}^{k})_{x>0}$ are generated by the kernel

\begin{equation}\label{K}
  K^{k}(y)=\frac{\Gamma(1+k)e}{2 \pi} \frac{1}{(1+iy)^{1+k}} \,,\,\, \,y \in \mathbb{R}
\end{equation}
in the sense that $K^{k}_{x}(t)=xK^{k}(xt)$  for all $x>0$ and $t \in \mathbb{R}$.
Moreover, note that, provided  $\lambda_{1}=0$, the function $f=1$ belongs to $H_{\infty,\ell}^{\lambda}[\re > 0]$ with  $R_{x}^{\lambda,k}(f)(0)=1$ for all $x>0$. Hence, by Remark~\ref{perronformula1} ($x=1$) we see that
\begin{equation} \label{Fouriervalue}
\int_{\R} e^{iy} K^{k}(y) dy=1.
\end{equation}

With the  aid of Theorem \ref{convolution1} we now continue the modification of the Perron-type formula from Remark~\ref{perronformula1} . The
following result is the main contribution of this subsection -- Perron's formula in terms of horizontal limits.

\begin{Theo}\label{rasmus}Let $f\in H_{\infty,\ell}^{\lambda}[\re > 0]$ and $k>\ell\ge 0$. Then
\begin{equation*}
R_{x}^{\lambda,k}(f)(0)=\int_{\R}  \frac{f^\ast(iy)}{(1+iy)^{\ell}}  R^{k,\ell}(x,y) dy\,,~\, x>0,
\end{equation*}
where
\begin{equation*}
R^{k,\ell}(x,y)=\int_{\R}P_{x^{-1}}(t-y)e^{itx} (1+x^{-1}+it)^{\ell}K^{k}_{x}(t) dt, ~\,y \in \mathbb{R}.
\end{equation*}
\end{Theo}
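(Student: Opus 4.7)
The plan is to combine the Perron-type formula recorded in Remark~\ref{perronformula1} with the Poisson convolution identity of Theorem~\ref{convolution1}, and then interchange the order of integration.

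\textbf{Step 1: Start from Perron's formula.} By Remark~\ref{perronformula1}, for every $x>0$,
\begin{equation*}
R_{x}^{\lambda,k}(f)(0)=\int_{-\infty}^{\infty} f(x^{-1}+it)\, e^{ixt} K_{x}^{k}(t)\, dt.
\end{equation*}

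\textbf{Step 2: Rewrite the integrand via the convolution identity.} Apply Theorem~\ref{convolution1} with $u=x^{-1}$ to express
\begin{equation*}
f(x^{-1}+it)=(1+x^{-1}+it)^{\ell}\int_{\R}\frac{f^\ast(iy)}{(1+iy)^{\ell}}\,P_{x^{-1}}(t-y)\,dy.
\end{equation*}
Inserting this into Step 1 yields a double integral
\begin{equation*}
R_{x}^{\lambda,k}(f)(0)=\int_{\R}\!\int_{\R}\frac{f^\ast(iy)}{(1+iy)^{\ell}}\,P_{x^{-1}}(t-y)\,e^{ixt}(1+x^{-1}+it)^{\ell}K_{x}^{k}(t)\,dy\,dt.
\end{equation*}

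\textbf{Step 3: Interchange the order of integration.} Applying Fubini, the inner $t$-integral produces precisely the kernel $R^{k,\ell}(x,y)$, which is the desired identity.

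\textbf{Main obstacle.} The one genuine issue is justifying Fubini, which requires absolute integrability of the integrand on $\R\times\R$. Three ingredients make this work, using crucially $k>\ell$. First, by Theorem~\ref{convolution1}, the function $y\mapsto f^{*}(iy)(1+iy)^{-\ell}$ lies in $L_{\infty}(\R)$, so the $y$-factor is uniformly bounded. Second, the Poisson kernel satisfies $\|P_{x^{-1}}\|_{L_{1}(\R)}=1$. Third, from the definition \eqref{K} one sees $|K_{x}^{k}(t)|\le C_k\, x\,(1+x|t|)^{-(1+k)}$, hence
\begin{equation*}
|(1+x^{-1}+it)^{\ell}K_{x}^{k}(t)|\le C_k\, x\,\frac{(1+x^{-1}+|t|)^{\ell}}{(1+x|t|)^{1+k}}\in L_{1}(\R,dt)
\end{equation*}
because the ratio decays like $|t|^{\ell-1-k}$ at infinity and $k-\ell>0$. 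Multiplying these bounds and integrating first in $t$ (using $\int P_{x^{-1}}(t-y)\,dy=1$), one obtains a finite majorant, which legitimates the swap and concludes the proof.
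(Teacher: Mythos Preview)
Your proof is correct and follows essentially the same route as the paper: Perron's formula from Remark~\ref{perronformula1}, insertion of the Poisson convolution identity from Theorem~\ref{convolution1}, and a Fubini swap to isolate the kernel $R^{k,\ell}(x,y)$. The paper carries out exactly these steps without spelling out the absolute-integrability check for Fubini, so your Step~3 justification (using $f^{*}(i\cdot)(1+i\cdot)^{-\ell}\in L_\infty$, $\|P_{x^{-1}}\|_{1}=1$, and the $|t|^{\ell-1-k}$ decay of $(1+x^{-1}+it)^{\ell}K_{x}^{k}(t)$) is actually more explicit than the original; note only the small slip that the clean majorant comes from integrating first in $y$ (via $\int P_{x^{-1}}(t-y)\,dy=1$), not in $t$ as you wrote.
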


\begin{proof}
By Remark~\ref{perronformula1} and the convolution formula from Theorem \ref{convolution1} we have
  \begin{align*}
R_{x}^{\lambda,k}(f)(0)&=\int_{\R} f(x^{-1}+it) e^{ixt} K_{x}^{k}(t) dt
\\&
=\int_{\R} \frac{f(x^{-1}+it)}{(1+x^{-1}+it)^{\ell}} e^{ixt} (1+x^{-1}+it)^{\ell}K_{x}^{k}(t) dt \\&=\int_{\R} \Big(\frac{f^\ast(i\cdot)}{(1+i\cdot)^{\ell}}*P_{x^{-1}}\Big)(t)e^{ixt} (1+x^{-1}+it)^{\ell}K_{x}^{k}(t) dt\\&=
\int_{\R} \int_{\R} \frac{f^\ast(iy)}{(1+iy)^{\ell}} P_{x^{-1}}(t-y)  e^{ixt} (1+x^{-1}+it)^{\ell}K_{x}^{k}(t) dy dt\\ &=\int_{\R}\frac{f^\ast(iy)}{(1+iy)^{\ell}}  \int_{\R} P_{x^{-1}}(t-y)  e^{ixt} (1+x^{-1}+it)^{\ell}K_{x}^{k}(t) dt dy \\ &= \int_{\R}  \frac{f^\ast(iy)}{(1+iy)^{\ell}}  R^{k,\ell}(x,y) dy.
\qedhere
\end{align*}
\end{proof}

In Lemma \ref{Matea} we are going to show how to  control  the $L_{1}(\R)$-norm of the functions $R^{k,\ell}(x,\cdot)$, which will be essential in all our applications of the preceding formula.

\section{Almost everywhere convergence} \label{proofs1}
The completion of the preceding preparations have paved the way for the first  of our four main contributions. As before, the sequence $\lambda=(\lambda_{n})$ always denotes  an arbitrary frequency.

\begin{Theo} \label{AE}  Let $f\in H_{\infty,\ell}^{\lambda}[\re > 0]$ and $k>\ell\ge 0$. Then the Dirichlet series
$\sum a_n(f) e^{-\lambda_n s}$ is   $(\lambda,k)$-Riesz summable  almost everywhere on the imaginary line,  and
for almost all  $\tau\in \R$
\begin{equation*}
\lim_{x\to \infty}R_{x}^{\lambda,k}(f)(i\tau)=f^{*}(i\tau)\,.
\end{equation*}
\end{Theo}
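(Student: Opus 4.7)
The plan is to extract an integral representation for the Riesz means at the point $i\tau$ from the Perron formula in Theorem~\ref{new-ban}, and then apply dominated convergence together with the non-tangential boundary behavior coming from Corollary~\ref{existencehorizontalA}. Starting from Remark~\ref{perronformula1} and applying the same formula after the translation $f(\cdot) \leadsto f(\cdot + i\tau)$ (which keeps the function inside $H_{\infty,\ell}^\lambda[\re > 0]$), one obtains
\[
R_x^{\lambda,k}(f)(i\tau) = \int_{\mathbb{R}} f(x^{-1} + it + i\tau)\,e^{ixt}\,K_x^k(t)\,dt.
\]
The natural substitution $s = xt$, combined with $K_x^k(s/x) = x K^k(s)$, converts this into the decisive form
\[
R_x^{\lambda,k}(f)(i\tau) = \int_{\mathbb{R}} f\bigl(x^{-1} + i s/x + i\tau\bigr)\,e^{is}\,K^k(s)\,ds.
\]

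The key step is to recognize that the argument $x^{-1} + i s/x + i\tau = \varepsilon + i\varepsilon s + i\tau$ with $\varepsilon = x^{-1}$ is exactly the expression appearing in Corollary~\ref{existencehorizontalA}. That corollary furnishes a single null set $E \subset i\mathbb{R}$ such that for every $i\tau \notin E$ and every fixed $s \in \mathbb{R}$,
\[
\lim_{x \to \infty} f\bigl(x^{-1} + i s/x + i\tau\bigr) = f^{*}(i\tau).
\]
Thus the integrand converges pointwise in $s$ to $f^{*}(i\tau)\,e^{is}\,K^{k}(s)$ for each admissible $\tau$. Using \eqref{Fouriervalue}, the limit of the integral will be exactly $f^{*}(i\tau)$, provided we can interchange limit and integral.

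To justify this interchange, I would produce an $L^{1}(\mathbb{R})$ dominating function. Since $f \in H_{\infty,\ell}^\lambda[\re > 0]$,
\[
\bigl|f(x^{-1} + is/x + i\tau)\bigr| \,\le\, \|f\|_{\infty,\ell}\bigl(1 + x^{-1} + |s|/x + |\tau|\bigr)^{\ell} \,\le\, C_{\tau}\,(1 + |s|)^{\ell}
\]
for all $x \geq 1$, while by the very definition of $K^{k}$ in \eqref{K} one has $|K^{k}(s)| \lesssim (1+|s|)^{-(1+k)}$. Hence the integrand is dominated by $C_{\tau}(1+|s|)^{\ell - 1 - k}$, and this bound lies in $L^{1}(\mathbb{R})$ precisely because $k > \ell$. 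Dominated convergence then yields
\[
\lim_{x\to\infty} R_{x}^{\lambda,k}(f)(i\tau) \,=\, f^{*}(i\tau)\!\int_{\mathbb{R}} e^{is} K^{k}(s)\,ds \,=\, f^{*}(i\tau)
\]
for every $i\tau$ outside the null set $E$, as desired.

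The main obstacle is not the dominated convergence itself but ensuring that the pointwise limit of $f(x^{-1} + is/x + i\tau)$ holds on a null set \emph{independent} of the parameter $s$. A mere radial version of Fatou's theorem would only give convergence for $s = 0$ (outside a null set), which is insufficient here because the kernel $K^{k}$ puts mass everywhere. This is exactly the reason Proposition~\ref{existencehorizontalB} was established in the Stolz-region form, and it is the ingredient that makes the whole argument go through; the exponent gap $k > \ell$ is also sharp, being forced by the integrability of $(1+|s|)^{\ell - 1 - k}$.
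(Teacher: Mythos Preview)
Your proof is correct and follows essentially the same route as the paper: the substitution $s=xt$ in the Perron integral from Remark~\ref{perronformula1}, pointwise convergence via the Stolz-region horizontal limits of Corollary~\ref{existencehorizontalA}, and dominated convergence using $k>\ell$ together with \eqref{Fouriervalue}. Your dominating function $C_\tau(1+|s|)^{\ell-1-k}$ is in fact slightly cleaner than the two-term splitting the paper uses, but the argument is otherwise identical.
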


Observe that this result for $k=\ell=0$ fails in the ordinary case $\lambda = (\log n)$.   This follows from
the Bayart-Konyagin Queff\'{e}lec example from Section~\ref{B} and the equality in \eqref{l=0}.

Before we come to the proof of Theorem~\ref{AE} we remark a sort of converse of   this result: If $\sum a_n(f) e^{-\lambda_n s}$ is   $(\lambda,k)$-Riesz summable at some point of the boundary line, then the horizontel limit of $f$ exists  and equals the
$(\lambda,k)$-Riesz sum of $f$ at this point. For the special case
$\lambda = (n)$ and $k=0$ this (after the standard reformulation) is nothing else than Abel's classical convergence theorem for power series.

\begin{Rema} \label{berlin}
  Let $f: [\re > 0] \to \mathbb{C}$ be a holomorphic function with a $\lambda$-Riesz germ and $k \geq 0$. Assume that $f$
  is $(\lambda,k)$-Riesz summable at $0$ with limit $A$, i.e.
  $\lim_{x \to \infty} R_x^{\lambda,k}(f)(0) = A$ exists.
  Then
  \[
  \lim_{\sigma \to 0} f(\sigma) = A\,.
  \]
  \end{Rema}
  \begin{proof}
  We assume (without loss of generality) that $A=0$.
  Note first that by assumption  the $\lambda$-Riesz germ of $f$ converges on $[\re >0]$, and that its limit function
  coincides with $f$. Then we know from \cite[Theorem 2.9]{DefantSchoolmann6}  that for each $\sigma >0$
    \[
    f(\sigma) = \frac{1}{\Gamma(k+1)} \sigma^{1+k} \int_0^\infty  S_t^{\lambda,k}(f)(0)e^{-\sigma t} dt\,.
    \]
   Now fix some $\varepsilon >0$, and choose $\tau_0 >0$ such that for all $t > \tau_0$
   \[
   \big| S_t^{\lambda,k}(f)(0)  \big|\leq \varepsilon t^k\,.
   \]
    Consequently,
    for each $\sigma >0$
    \begin{align*}
           |f(\sigma)|
           &
           \leq
           \frac{1}{\Gamma(k+1)} \sigma^{1+k} \int_0^{\tau_0}  S_t^{\lambda,k}(f)(0)e^{-\sigma t} dt
           +
           \frac{1}{\Gamma(k+1)} \sigma^{1+k} \varepsilon \int_{\tau_0} ^\infty t^k e^{-\sigma t} dt
           \\&
           \leq
           \frac{1}{\Gamma(k+1)} \sigma^{1+k} \int_0^{\tau_0}  S_t^{\lambda,k}(f)(0) dt
           +
             \varepsilon\,.
         \end{align*}
    Since the first term  tends to $0$ whenever $\sigma$ tends to $0$, we obviously obtain the  conclusion.
          \end{proof}

\begin{proof}[Proof of Theorem \ref{AE}]
Applying the substitution $y=tx$ in Remark \ref{perronformula1}, we obtain for all $\tau >0$ and all $x >0$
\begin{equation*}
R_{x}^{\lambda,k}(f)(i\tau) =\int_{-\infty}^{\infty} f(x^{-1}+iyx^{-1}+i\tau)e^{iy} K^{k}(y) dy.
\end{equation*}
Since by Corollary~\ref{existencehorizontalA} for all  $y\in \R$ and almost all $\tau >0$ we have
\begin{equation*}
\lim_{x \to \infty} f(x^{-1}+iyx^{-1}+i\tau)=f^{*}(i\tau),
\end{equation*}
the dominated convergence theorem and the use of \eqref{Fouriervalue} imply
\begin{equation*}
\lim_{x\to \infty} R_{x}^{\lambda,k}(f)(i\tau)=f^{*}(i\tau) \int_{-\infty}^{\infty} e^{iy}K^{k}(y) dy =f^{*}(i\tau).
\end{equation*}
Indeed,  fixing  $\tau>1$ and  $x>1$, we for $y \in \mathbb{R}$ have
\begin{align*}
&
 |f(x^{-1}+iyx^{-1}+i\tau)e^{iy} \frac{1}{(1+iy)^{1+k}}|
 \\&
   \leq    \|f\|_{\infty,\ell}  \frac{|1 + (x^{-1}+iyx^{-1}+i\tau)|^\ell}{|1+iy|^{1+k}}
   \\&
    \leq \|f\|_{\infty,\ell} \, 2^{\max(0,\ell -1)}
   \Big(\frac{|1 +iyx^{-1}|^\ell}{|1+iy|^{1+k}}
   + \frac{|x^{-1}+i\tau)|^\ell}{|1+iy|^{1+k}}\Big)
   \\&
    \leq \|f\|_{\infty,\ell} \, 2^{\max(0,\ell -1)}
   \Big(\frac{1}{|1+iy|^{1+k-\ell}}
   + \frac{|1+i\tau)|^\ell}{|1+iy|^{1+k}}\Big)\,. \qedhere
\end{align*}
\end{proof}

 From
\eqref{l=0} we immediately deduce the case $\ell=0$, which is of special interest.

\begin{Coro} \label{APcase}  Let $f\in H_{\infty}^{\lambda}[\re > 0]$ and $k> 0$. Then for almost every $\tau\in \R$
\begin{equation*}
\lim_{x\to \infty}R_{x}^{\lambda,k}(f)(i\tau)=f^{*}(i\tau)\,.
\end{equation*}
In particular, every  Dirichlet series $D\in \mathcal{D}_{\infty}(\lambda)$ is almost everywhere  $(\lambda,k)$-Riesz summable
on the imaginary line.
\end{Coro}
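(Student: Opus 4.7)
The plan is to deduce this corollary directly from the previous Theorem~\ref{AE}, which is the substantive result; Corollary~\ref{APcase} is really a translation of that theorem through the identifications collected in the introduction. The only things to check are that the hypotheses of Theorem~\ref{AE} apply with $\ell = 0$ and that the $(\lambda,k)$-Riesz means computed from Bohr coefficients agree with those computed from Dirichlet coefficients.

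For the first assertion I would invoke the isometric identity \eqref{l=0}, namely $H_{\infty}^{\lambda}[\re > 0] = H_{\infty,0}^{\lambda}[\re > 0]$. Consequently, every $f \in H_{\infty}^{\lambda}[\re > 0]$ lies in $H_{\infty,0}^{\lambda}[\re > 0]$, its horizontal limit $f^{\ast}$ is well-defined almost everywhere on $i\R$ by Corollary~\ref{existencehorizontalA}, and any $k > 0 = \ell$ is admissible in Theorem~\ref{AE}. Applying that theorem directly gives
\[
\lim_{x \to \infty} R_{x}^{\lambda,k}(f)(i\tau) = f^{\ast}(i\tau)
\]
for almost every $\tau \in \R$, which is the first claim.

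For the statement about $\mathcal{D}_{\infty}(\lambda)$ I would use the isometric inclusion $\mathcal{D}_{\infty}(\lambda) \subset H_{\infty}^{\lambda}[\re > 0]$ recorded after \eqref{periodic-coin}, which preserves Dirichlet coefficients $a_{n}(D)$ and Bohr coefficients $a_{n}(f)$. Thus, if $D = \sum a_{n}(D) e^{-\lambda_{n} s} \in \mathcal{D}_{\infty}(\lambda)$ has bounded holomorphic limit $f$ on $[\re > 0]$, then $a_{n}(D) = a_{n}(f)$ for all $n$, and hence $R_{x}^{\lambda,k}(D)(s) = R_{x}^{\lambda,k}(f)(s)$ for every $s \in \C$ and $x > 0$. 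The previous paragraph then yields almost everywhere $(\lambda,k)$-Riesz summability of $D$ on the imaginary line.

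There is no genuine obstacle in this proof: the analytic core (the Perron-type formula of Theorem~\ref{rasmus}, the Fatou-type Proposition~\ref{existencehorizontalB}, and the dominated-convergence estimate used in Theorem~\ref{AE}) has already been carried out for general $\ell \geq 0$. The only mild subtlety worth verifying carefully is that the Riesz means appearing in $\mathcal{D}_{\infty}(\lambda)$ are formed from the Dirichlet coefficients of the series $D$, whereas those in Theorem~\ref{AE} are formed from the Bohr coefficients of the limit function $f$; the coefficient-preserving nature of the embedding $\mathcal{D}_{\infty}(\lambda) \hookrightarrow H_{\infty}^{\lambda}[\re > 0]$ is exactly what reconciles the two viewpoints.
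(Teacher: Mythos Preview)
Your proposal is correct and follows precisely the paper's own route: the corollary is obtained immediately from Theorem~\ref{AE} via the identification \eqref{l=0} (so $\ell=0$ and any $k>0$ works), and the ``in particular'' part is the coefficient-preserving inclusion $\mathcal{D}_\infty(\lambda)\subset H_\infty^\lambda[\re>0]$. The additional care you take in matching Dirichlet and Bohr coefficients is welcome but not new content.
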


In view of the Bayart-Konyagin-Queff\'{e}lec counterexample (see Section~\ref{B}), it  seems worthwhile  to mention the special case $\lambda = (\log n)$ separately.

\begin{Coro} \label{APcaseAP}  Let $f\in H_{\infty}^{(\log n)}[\re > 0]$ and $k> 0$. Then for almost every $\tau\in \R$
\begin{equation*}
\lim_{x\to \infty}R_{x}^{(\log n),k}(f)(i\tau)=f^{*}(i\tau)\,.
\end{equation*}
In particular,  if $D = \sum a_n n^{-s} \in \mathcal{D}_{\infty}$ is the Dirichlet series associated to $f$ (see again~\eqref{DD=HH}), then for  almost all $\tau \in \mathbb{R}$
\begin{equation*}
   \lim_{x \to \infty} \sum_{\log n < x} a_n \frac{1}{n^{i\tau}}\Big(  1- \frac{\log n}{x} \Big)^k =  f^{*}(i\tau)  \,.
\end{equation*}
\end{Coro}

\section{A principle of localization} \label{DiniA}

The second main result (after Theorem~\ref{AE}) may be seen as a principle of localization -- compare with what we recalled in \eqref{localone} for the one variable case.

\begin{Theo} \label{principleofloc2}
Let $k>\ell\ge 0$ and $f,g\in H_{\infty,\ell}^{\lambda}[\re > 0]$. Assume that $f^{*}=g^{*}$ on some open interval $I\subset [\re > 0]$. Then, given $i\tau \in I$, the limit $\lim_{x\to \infty}R_{x}^{\lambda,k}(f)(i\tau)$ exists if and only if $\lim_{x\to \infty}R_{x}^{\lambda,k}(g)(i\tau)$ exists, and in this case
\begin{equation*}
\lim_{x\to \infty}R_{x}^{\lambda,k}(f)(i\tau)=\lim_{x\to \infty}R_{x}^{\lambda,k}(g)(i\tau).
\end{equation*}
\end{Theo}

The proof of this principle is given at the end of this section, and it turns out to be a simple consequence of the following independently interesting result. Recall the definition of the kernel functions $R^{k,\ell}(x,\cdot)$, $x>0$, from Theorem~\ref{rasmus}.

\begin{Theo} \label{perronformula22} Let $f\in H_{\infty,\ell}^{\lambda}[\re > 0]$ and $k>\ell\ge 0$. Then for every
 $\delta>0$
\begin{equation*}
\lim_{x\to\infty}\int_{|y|\ge \delta} \frac{f^\ast(iy)}{(1+iy)^{\ell}}R^{k,\ell}(x,y)  dy = 0.
\end{equation*}
\end{Theo}

The proof of Theorem~\ref{perronformula22} requires to control the norm of $R^{k,\ell}(x,\cdot)$, which is provided by the following lemma.

\begin{Lemm} \label{Matea}
Let $k>\ell\ge 0$. Then there is a constant $C(k, \ell) >0$ such that for each $x>1$ and every $y \in \mathbb{R}$

\begin{equation*}
|R^{k,\ell}(x,y)|
\leq C(k, \ell)
\begin{cases}
\frac{x}{|1+iyx|^{1+k-\ell}},
 & k <1,
\\[2ex]
    \frac{x}{|1+iyx|^{2}} + \frac{x}{|1+iyx|^{1+k-\ell}},            &  k \ge 1 \,\,\,\text{and} \,\,\,k-\ell \leq 1,
            \\[2ex]
    \frac{x}{|1+iyx|^{2}},           &  k \ge 1  \,\,\,\text{and} \,\,\, k-\ell \ge 1 \,.
\end{cases}
\end{equation*}
Moreover, for every $\delta>0$
\begin{equation} \label{sunday}
\lim_{x\to \infty} \int_{|y|>\delta} |R^{k,\ell}(x,y)| dy =0.
\end{equation}
\end{Lemm}

Let us first deduce Theorem~\ref{perronformula22} from Lemma \ref{Matea}.

\begin{proof}[Proof of Theorem~\ref{perronformula22}]
The 'moreover-part' of Theorem \ref{convolution1} and \eqref{sunday} imply
\begin{align*}
\lim_{x\to \infty} \Big|\int_{|y|\ge \delta} & \frac{f^\ast(iy)}{(1+iy)^{\ell}} R^{k,\ell}(x,y) dy\Big| \le \|f\|_{\infty,\ell}\lim_{x\to \infty} \int_{|y|\ge \delta} |R^{k,\ell}(x,y)| dy =0,
\end{align*}
the conclusion.
\end{proof}

\begin{proof}[Proof of Lemma \ref{Matea}]
Recall (from Theorem~\ref{rasmus} and \eqref{K}) that for $y \in \mathbb{R}$ and $x >0$
\begin{equation*}
R^{k,\ell}(x,y)= \frac{\Gamma(1+k) e}{2\pi } \int_{\R}P_{x^{-1}}(t-y)e^{itx}  \frac{x(1+x^{-1}+it)^{\ell}}{(1+ixt)^{1+k}} dt\,,
\end{equation*}
Then
\begin{equation*}
|R^{k,\ell}(x,y)| \leq \frac{\Gamma(1+k) e}{2\pi } \int_{\R}P_{x^{-1}}(t-y) \frac{x |1+x^{-1}+it|^{\ell}}{|1+ixt|^{1+k}} dt\,,
\end{equation*}
and  hence for $x >1$ (implying $|x^{-1}+it|\leq |1+ixt|$ for every $t \in \mathbb{R}$) we get
\begin{align*}
   &
   \int_{\R}P_{x^{-1}}(t-y) \Big|\frac{x (1+x^{-1}+it)^{\ell}}{(1+ixt)^{1+k}} dt
   \\&
   \leq
      C(\ell) \bigg(
   \int_{\R}P_{x^{-1}}(t-y) \frac{x }{|1+ixt|^{1+k}}dt
   +
   \int_{\R}P_{x^{-1}}(t-y) \frac{x |x^{-1}+it|^{\ell}}{|1+ixt|^{1+k}}dt
         \bigg)
         \\&
   \leq
      C(\ell) \bigg(
   \int_{\R}P_{x^{-1}}(t-y) \frac{x }{|1+ixt|^{1+k}}dt
   +
   \int_{\R}P_{x^{-1}}(t-y) \frac{x}{|1+ixt|^{1+k-\ell}}dt
         \bigg)\,,
                                  \end{align*}
   where  $C(\ell) = 2^{\max(0,\ell -1)}$. So it remains to  control the last  two integrals.
  We already know from \cite[Lemma~3.4]{defant2020riesz} (with $u=0$ and $v=x^{-1}$) that for all $0<\alpha\le 1$
\begin{equation*}
\int_{\R} \frac{P_{x^{-1}}(t-y)}{|x^{-1}+it|^{1+\alpha}} dt\le \frac{2}{|x^{-1}+iy|^{1+\alpha}}.
\end{equation*}
Now assume  first that $k < 1$, which implies  $k-\ell\le 1$. Then
\begin{align*}
   &
   \int_{\R}P_{x^{-1}}(t-y) \frac{x }{|1+ixt|^{1+k}}dt
   +
   \int_{\R}P_{x^{-1}}(t-y) \frac{x}{|1+ixt|^{1+k-\ell}}dt
   \\&
   =x^{-k} \int_{\R} \frac{P_{x^{-1}}(t-y)}{|x^{-1}+it|^{1+k}} dt
   +
   x^{-k+\ell} \int_{\R} \frac{P_{x^{-1}}(t-y)}{|x^{-1}+it|^{1+k-\ell}} dt
                  \\&
   \leq
         x^{-k} \frac{2}{|x^{-1}+iy|^{1+k}}
   +
   x^{-k+\ell} \frac{2}{|x^{-1}+iy|^{1+k-\ell}}
                 \\&
   =
         2\frac{x}{|1+ixy|^{1+k}}
   +
   2\frac{x}{|1+ixy|^{1+k-\ell}}
   \leq
         4\frac{x}{|1+ixy|^{1+k-\ell}}\,,
        \end{align*}
which proves the first claim. Assume second that $k \ge  1$ and $k-\ell\le 1$. Then
 $ |1+ixt|^{2} \leq |1+ixt|^{1+k}$ for  $t \in \mathbb{R}$, and consequently
\begin{align*}
   &
   \int_{\R}P_{x^{-1}}(t-y) \frac{x }{|1+ixt|^{1+k}}dt
   +
   \int_{\R}P_{x^{-1}}(t-y) \frac{x}{|1+ixt|^{1+k-\ell}}dt
   \\   &
    \leq \int_{\R}P_{x^{-1}}(t-y) \frac{x }{|1+ixt|^{2}}dt
   +
   \int_{\R}P_{x^{-1}}(t-y) \frac{x}{|1+ixt|^{1+k-\ell}}dt
   \\&
   =x^{-2} \int_{\R} \frac{P_{x^{-1}}(t-y)}{|x^{-1}+it|^{2}} dt
   +
   x^{-k+\ell} \int_{\R} \frac{P_{x^{-1}}(t-y)}{|x^{-1}+it|^{1+k-\ell}} dt
                  \\&
   \leq
         x^{-2} \frac{2}{|x^{-1}+iy|^{2}}
   +
   x^{-k+\ell} \frac{2}{|x^{-1}+iy|^{1+k-\ell}}
                 \\&
   =
         \frac{2x}{|1+ixy|^{2}}
   +
   \frac{2x}{|1+ixy|^{1+k-\ell}}\,.
                       \end{align*}
Similarly, we handle the case  $k \ge 1$ and $k-\ell>1$ getting
\begin{align*}
      \int_{\R}P_{x^{-1}}(t-y) \frac{x }{|1+ixt|^{1+k}}dt
   +
   \int_{\R}P_{x^{-1}}(t-y) \frac{x}{|1+ixt|^{1+k-\ell}}dt
   \leq
         \frac{4x}{|1+ixy|^{2}}
   \,.
                       \end{align*}
                      The 'moreover part' follows by substitution. Since all three cases follow the same lines, we only consider the case $k<1$. Then
\begin{align*}
\int_{|y|>\delta} |R^{k,\ell}(x,y)| dy & \le C(k,\ell) \int_{|y|>\delta} \frac{x}{|1+iyx|^{1+k-\ell}} dy\\ &=C(k,\ell) \int_{|y|>\delta x} \frac{1}{|1+it|^{1+k-\ell}} dt,
\end{align*}
            which tends to zero as $x\to \infty$.         This completes the proof.
\end{proof}

Finally, we come back to the proof of Theorem~\ref{principleofloc2}.

\begin{proof}[Proof of Theorem~\ref{principleofloc2}]
Translating, if necessary, we  may assume  that $ 0 \in I$ and $\tau = 0$. We choose some $\delta > 0$  such that
$i[-\delta, \delta] \subset I$. Then  by the Perron-type formula from Theorem~\ref{rasmus}
 we have
\begin{equation}\label{split}
  R_{x}^{\lambda,k}(f)(0)=
  \int_{|y|\ge \delta} \frac{f^\ast(iy)}{(1+iy)^{\ell}}R^{k,\ell}(x,y)  dy
  +
  \int_{|y|\leq \delta} \frac{f^\ast(iy)}{(1+iy)^{\ell}}R^{k,\ell}(x,y)  dy\,.
\end{equation}
Since $f^\ast = g^\ast$ on $i[-\delta, \delta] \subset I$, we then observe that the claim  is an immediate consequence of  Theorem~\ref{perronformula22}.
\end{proof}

\section{Uniform convergence} \label{unif}

We come to our third main result, which among others recovers  Theorem \ref{Theo42} for functions in $H_{\infty,\ell}^{\lambda}[\re > 0], \,\ell \ge 0$.
 \begin{Theo} \label{mainresult1} Let $k>\ell\ge 0$ and $f\in H_{\infty,\ell}^{\lambda}[\re > 0]$. If $f^{*}$ is continuous on some open interval $I\subset [\re = 0]$, then for all $i\tau \in I$
\begin{equation*}
\lim_{x\to \infty} R_{x}^{\lambda,k}(f)(i\tau)=f^{*}(i\tau)\,,
\end{equation*}
with uniform convergence on every closed sub interval $J\subset I$. Moreover, in this case $\big(R_{x}^{\lambda,k}(f)(\cdotp)\big)_{x>0}$ converges uniformly on each 'flattened cone'
\[
K(\gamma, J) = \big\{ z \in [\re >0]\colon z = iy + w \text{  with $iy \in J$ and $\arg(w) < \gamma$ }  \big\}\,,\,\,\, 0 < \gamma < \frac{\pi}{2},
\]
and for each $z = iy + w \in K(\gamma, J)$
\[
f(z) =
\lim_{x\to \infty}R_x^{\lambda,\ell}(f)(z)
=
\frac{w^{k+1}}{\Gamma(1+k)}\int_{0}^{\infty} t^k  R_{t}^{\lambda,k}(f)(iy) e^{-wt}  dt
\,.
\]
\end{Theo}

\begin{Rema}
 Indeed, a closer look at the proof of the 'moreover-part' shows, that we in fact prove  the following: Given a formal Dirichlet series $D=\sum a_{n}e^{-\lambda_{n}s}$, an interval $J\subset [\re = 0]$, and $0 < \gamma < \frac{\pi}{2}$, then $D $ is uniformly $(\lambda,k)$-Riesz summable on $K(\gamma, J)$, provided $D$ is uniformly $(\lambda,k)$-Riesz summable on $J$. Knowing this, the 'moreover-part' of Theorem~\ref{mainresult1}  is an immediate consequence of the first part.
\end{Rema}

Before we prove Theorem \ref{mainresult1} we illustrate this result  with two simple
examples.

The first example shows that the case $k=1$ and $\lambda = ( n)$ in fact is covered by
Fejer's theorem applied to  functions in $H_{\infty}(\T)\cap C(\T)$. In fact, given  $g \in C(\T)$, Fejer's theorem shows that uniformly for $z \in \T$
\begin{equation}\label{fejer}
  g(z)=\lim_{x\to \infty} \sum_{n<x} \hat{g}(n) z^n(1-\frac{n}{x})=\lim_{x\to \infty} \frac{1}{x}\sum_{n=1}^{x} \sum_{k=1}^{n}a_{k}z^k.
\end{equation}
For functions from $H_{\infty}(\T)\cap C(\T)$ this may be deduced from Theorem \ref{mainresult1}. Indeed, for each $g \in H_{\infty}(\T)\cap C(\T)$ there is a function $f \in H_{\infty}(\D)$
such that
$f^\ast = g$ on $\T$ and $\partial_n f(0)/n! = \hat{g}(n)$ for all $n \in \mathbb{N}_0$. But then as $g$ is continuous on $\T$ the outcome of Theorem~\ref{mainresult1}
with $k=1$ and $\lambda = ( n)$ precisely is
\eqref{fejer}, since the substitution $z = e^{-s}$ leads to a coefficient preserving isometry from
$H_{\infty}(\D)$ onto $H^{(n)}_{\infty,0}[\re > 0]$.

To see another example, denote by $\zeta:  \C \setminus \{1\} \to \C$ the zeta-function, which
is holomorphic with a simple pole in $s=1$, and which
on $[\re > 1]$ is the pointwise limit of the zeta-Dirichlet series $\sum n^{-s}$. Moreover, consider the entire function
\[
\eta: \C  \to \C\,, \,\,\, \eta(s) = (1 - 2^{1-s}) \zeta(s)\,,
\]
which on $[\re >0]$ is nothing else then the  pointwise limit of the $\eta$-Dirichlet series $\sum (-1)^{n+1} n^{-s}$.
As remarked in \cite[Section~3.1]{DefantSchoolmann6}
\begin{equation*} \label{ordereta}
\ell > \dfrac{1}{2}
\,\,\,\,\, \Rightarrow \,\,\,\,\,
\eta \in H^{(\log n)}_{\infty, \ell}[\re > 0]
\,\,\,\,\, \Rightarrow \,\,\,\,\,
\ell \ge \dfrac{1}{2}\,,
\end{equation*}
and in particular, $\eta\notin \mathcal{H}_{\infty}((\log n))$.
Hence, Theorem \ref{mainresult1} implies that for $k>1/2$
\begin{equation}\label{notoptimal}
\eta(it)=\lim_{x\to \infty} \sum_{\log(n)<x} (-1)^{n} n^{-it}(1-\frac{\log(n)}{x})^{k}
\end{equation}
uniformly on every closed interval $I\subset [\re = 0]$.

\begin{proof}[Proof of Theorem \ref{mainresult1}]
Without loss of generality
we all  over the proof assume that  that $\lambda_{1}=0$.

For the proof of the first part it (after translation)  suffices to check that $f^\ast$ is
$(\lambda,k)$-Riesz summable in $\tau=0$,  assuming  without loss of generality that $0\in I$. Moreover, we assume  that $f^\ast(0)=0$, since, if this is not the case, we may  consider $f-f^{*}(0) \in H_{\infty,\ell}^{\lambda}[\re > 0]$ instead of $f^\ast$.
As before we  distinguish the two cases $k <1$ and $k \ge 1$, and start with the first one.

 Fix $\varepsilon>0$ and choose $\delta>0$ such that $|f^\ast(iy)|\le \varepsilon$ for all $|y|\le \delta$, which is possible using the continuity of $f^{*}$ at the origin.
Then by Lemma \ref{Matea} and substitution
\begin{align*}
\Big|\int_{|y|\le \delta} f^{*}(iy)\frac{ R^{k,\ell}(x,y)}{(1+iy)^{\ell}} dy\Big|
&
\leq
\int_{|y|\le \delta} |f^{*}(iy) R^{k,\ell}(x,y)| dy
\\&
\le \varepsilon  \int_{|y|\le \delta } \frac{x}{|1+ixy|^{1+k-\ell}} dy
\le \varepsilon  \int_{\mathbb{R}} \frac{1}{|1+it|^{1+k-\ell}} dt.
\end{align*}
Hence,
 splitting like in \eqref{split} and using  Theorem \ref{perronformula22}, we finally get that
 \begin{equation*}
 \lim_{x\to \infty} R_{x}^{\lambda,k}(f)(0)=0=f^{*}(0).
 \end{equation*}
Since the second case $k \ge 1$ follows the same lines, the  first part of Theorem \ref{mainresult1} is accomplished.

For the proof of the second part assume that  $J\subset I$ is  a closed sub interval. Let $\delta_{0}>0$ be such that $J_{0}=J\pm i\delta_{0}\subset I$, and note that $f^{*}$ is uniformly continuous on $J_{0}$. Fix $\varepsilon>0$ and let $\delta_{0}>\delta>0$ such that $|f^{*}(i(y+\tau))-f^{*}(
\tau)|\le \varepsilon$ for all $|y|\le \delta$ and $i\tau \in J$.
Looking at \eqref{split}, using Theorem \ref{convolution1}, Corollary \ref{convolution1NEW}, and again Lemma \ref{Matea} as before,  for all $\varepsilon>0$ and $i\tau \in J$
\begin{align*}
&
| R_{x}^{\lambda,k}(f)(i\tau)-f^{*}(i\tau)|
  \\&
  \leq
\Big|\int_{|y|\ge \delta} f^\ast(i(y+\tau))\frac{R^{k,\ell}(x,y)}{(1+iy)^{\ell}}  dy\Big|
  +
  \Big|\int_{|y|\le \delta} (f^{*}(i(y+\tau))-f^{*}(i\tau))\frac{R^{k,\ell}(x,y)}{(1+iy)^{\ell}} dy \Big|
  \\&
  \leq \|f\|_{\infty,\ell}
  \int_{|y|\ge \delta} \frac{|(1+i(y+\tau)|^{\ell}}{|1+iy|^{\ell}} |R^{k,\ell}(x,y)|  dy
  + \varepsilon \int_{|y|\le \delta} |R^{k,\ell}(x,y)| dy
  \\&
    \le \|f\|_{\infty,\ell} C(\ell,J) \int_{|y|\ge \delta} |R^{k,\ell}(x,y)|  dy +\varepsilon C(k,\ell)\,,
  \end{align*}
  which then by  \eqref{sunday} (from Lemma \ref{Matea}) ensures that $\big(R_{x}^{\lambda,k}(f)(\cdotp)\big)_{x>0}$ converges uniformly to $f^\ast$ on $J$.

    To verify the 'moreover part' is slightly more involved. Choose $m\in~\N_{0}$ such that $m< k\le  m+1$. Fixing $iy \in J$, we define the $\lambda$-Dirichlet series
  \[
  D^y = -f^\ast(iy) + \sum a_n(f)e^{-\lambda_n s}
  \]
  (recall that
$\lambda_1=0$) and observe that for all  $x > 0$ and $s \in \mathbb{C}$
\begin{equation}\label{ibk}
  R_x^{\lambda,k}(D^y)(s) = -f^\ast(iy)+ R_x^{\lambda,k}(f)(s)\,.
\end{equation}
Applying  \cite[Lemma~4.11]{DefantSchoolmann6}  to $D$ (or more precisely to the horizontal  translation $D^y_{iy}$ of $D^y$ about $iy$), we see that there is a constant $L=L(m,k)$ such that for all
$z=\sigma+i\tau \in [\re >0]$ of the form $z= iy + w \in K(J,\gamma)$ (so  $iy \in J$ and $\arg(w) < \gamma$) we have
\begin{align*}
&\Big|\frac{x^{k}}{\Gamma(m+2)}\int_{0}^{x}t^{m+1}R_{t}^{\lambda,m+1}(D^y)(iy) w^{m+2}\Big(1- \frac{t}{x}\Big)^{k} e^{-wt} dt- x^{k}R_{x}^{\lambda,k}(D^y)(u)\Big|
\\[1ex] &
=\Big|\frac{x^{k}}{\Gamma(m+2)}\int_{0}^{x}t^{m+1}R_{t}^{\lambda,m+1}(D^y_{iy})(0) w^{m+2}\Big(1- \frac{t}{x}\Big)^{k} e^{-wt} dt- x^{k}R_{x}^{\lambda,k}(D^y_{iy})(w)\Big|
\\[1ex] &
\le L(m,k)  \sum_{j=1}^{m+1} |\sec(\gamma)|^{j} \int_{0}^{x} \big|t^{k}R_{t}^{\lambda,k}(D^y)(iy)\big| t^{-j}  (x-t)^{j-1}  dt \\ & + e^{-\sigma x} \big|x^{k}R_{x}^{\lambda,k}(D^y)(iy)\big|.
\end{align*}
From \cite[Lemma~4.12]{DefantSchoolmann6} we know that uniformly in $y$ and $w$ (as above)
\begin{align*}
\lim_{x\to \infty} \int_{0}^{x}t^{m+1}R_{t}^{\lambda,m+1}(D^y)(iy)& w^{m+2}\Big(1- \frac{t}{x}\Big)^{k} e^{-wt} dt
\\& =\int_{0}^{\infty}t^{m+1}R_{t}^{\lambda,m+1}(D^y)(iy) w^{m+2} e^{-wt} dt\,,
\end{align*}
and  by \cite[Lemma~4.5]{DefantSchoolmann6}
\begin{align*}
\int_{0}^{\infty}t^{m+1}R_{t}^{\lambda,m+1}(D^y)(iy) w^{m+2} e^{-wt} dt
=
\frac{\Gamma(m+2)}{\Gamma(k+1)}
\int_{0}^{\infty} t^{k}R_{t}^{\lambda,k}(D^y)(iy) w^{k+1} e^{-wt} dt\,,
\end{align*}
so together
\begin{align*}
\lim_{x\to \infty} \frac{1}{\Gamma(m+2)}\int_{0}^{x}t^{m+1}R_{t}^{\lambda,m+1}(D^y)&(iy) w^{m+2}\Big(1- \frac{t}{x}\Big)^{k} e^{-wt} dt
\\&
=
\frac{1}{\Gamma(k+1)}
\int_{0}^{\infty} t^{k}R_{t}^{\lambda,k}(D^y)(iy) w^{k+1} e^{-wt} dt\,.
\end{align*}
Moreover, following the proof of \cite[Theorem~2.9]{DefantSchoolmann6} and using \eqref{ibk}, we have that
\begin{align*} \label{duschen}
&
\lim_{x\to \infty}  x^{-k} \sum_{j=1}^{m+1} |\sec(\gamma)|^{j} \int_{0}^{x} \big|t^{k}R_{t}^{\lambda,k}(D^y)(iy)\big| t^{-j}  (x-t)^{j-1}  dt
\\&
\leq \lim_{x\to \infty}x^{-k} \sum_{j=1}^{m+1} |\sec(\gamma)|^{j} \int_{0}^{x} \sup_{iy\in J}|R_{t}^{\lambda,k}(f)(iy)-f^{*}(iy)|  t^{-(j-k)} (x-t
)^{j-1}  dt
=0\,,
\end{align*}
still uniformly in $y$.
All together we obtain that uniformly  in $y$ and $w$ (so uniformly in $z$)
  \begin{align*}
\lim_{x\to \infty}-f^\ast(iy)& + R_x^{\lambda,k}(f)(z)
\\&
=\frac{w^{k+1}}{\Gamma(1+k)}\int_{0}^{\infty} t^k \big(-f^\ast(iy)+ R_{t}^{\lambda,k}(f)(iy)\big) e^{-wt}  dt
\\&
=
-f^\ast(iy)
\frac{w^{k+1}}{\Gamma(1+k)}\int_{0}^{\infty} t^k  e^{-wt}  dt
+
\frac{w^{k+1}}{\Gamma(1+k)}\int_{0}^{\infty} t^k  R_{t}^{\lambda,k}(f)(iy) e^{-wt}  dt
\\&
=
-f^\ast(iy)
+
\frac{w^{k+1}}{\Gamma(1+k)}\int_{0}^{\infty} t^k  R_{t}^{\lambda,k}(f)(iy) e^{-wt}  dt
\,,
\end{align*}
and finally
\begin{align*}
\lim_{x\to \infty}R_x^{\lambda,k}(f)(z)
=
\frac{w^{k+1}}{\Gamma(1+k)}\int_{0}^{\infty} t^k  R_{t}^{\lambda,k}(f)(iy) e^{-wt}  dt
\,,
\end{align*}
which completes the argument.
\end{proof}

\section{A  Dini test} \label{DiniB}

Finally, we come to the last main contribution announced in the introduction -- a Dini test for functions in $H_{\infty,\ell}^{\lambda}[\re > 0]$.
\begin{Theo} \label{Dinitype1} Let $k>\ell\ge 0$ and $f\in H_{\infty,\ell}^{\lambda}[\re > 0]$. If for $\tau \in \R$
there is $\delta>0$  such that
\begin{equation*}
\int_{-\delta}^{\delta} \frac{|f^{*}(i(y+\tau))-f^{*}(i\tau)|}{|y|^{1+k-\ell}} dy<\infty\,,
\end{equation*}
then
\begin{equation*}
\lim_{x\to \infty} R_{x}^{\lambda,k}(f)(i\tau)=f^{*}(i\tau).
\end{equation*}
\end{Theo}

\begin{proof} As argued in the proof of Theorem \ref{mainresult1} we may assume that $\tau=0$ and $f^{*}(0)=0$ (provided that w.l.o.g. $\lambda_{1}=0$). According to  the splitting from \eqref{split} and Theorem \ref{perronformula22}, the claim follows once we show that
\begin{equation*}
\lim_{x\to\infty}\int_{|y|\le \delta} f^{*}(iy) \frac{R^{k,\ell}(x,y)}{(1+iy)^{\ell}} dy=0.
\end{equation*}
Indeed, by Lemma \ref{Matea}, provided $k<1$,
\begin{align*}
&\big|\int_{-\delta}^{\delta} f^{*}(iy) \frac{R^{k}(x,y)}{(1+iy)^{\ell}} dy\big| \le C(k,\ell)\int_{-\delta}^{\delta} |f^{*}(iy)| \frac{x}{|1+ixy|^{1+k-\ell}} dy\\ & =C(k,\ell)x^{-(k-\ell)} \int_{-\delta}^{\delta}  \frac{|f^{*}(iy)|}{|x^{-1}+iy|^{1+k-\ell}}dy\le C(k,\ell)x^{-(k-\ell)}  \int_{-\delta}^{\delta}  \frac{|f^{*}(iy)|}{|y|^{1+k-\ell}} dy,
\end{align*}
which by assumption vanishes as $x \to \infty$. The case $k\ge 1$ follows the same lines using Lemma \ref{Matea} accordingly.
\end{proof}

\section{A link to Carleson's theorem} \label{carleson}
Recall from \eqref{portugal} that for
every Dirichlet series $D = \sum a_n n^{-s}$ with $(a_{n})\in \ell_{2}$ (so in particular, if $D \in \mathcal{D}_{\infty}$) the so-called vertical limits
\begin{equation} \label{südtirol}
  \sum a_n \chi(n) n^{-it}
\end{equation}
converge for almost all characters
$\chi: \mathbb{N} \to \mathbb{T}$ and almost all $t \in \mathbb{R}$. In short, for such Dirichlet series almost all  vertical limits of $D$ converge almost everywhere on the boundary line.

In the introduction we indicate that this result in fact is a consequence of a Carleson-type convergence theorem for functions in $L_2(\mathbb{T}^\infty)$.
In \cite{defant2020variants} we proved a  Carleson-type  theorem for $\lambda$-Dirichlet series which belong to  the Hardy spaces $\mathcal{H}_p(\lambda), 1 < p \leq \infty$.
Inspired by \eqref{südtirol}, this result has consequences for the boundary behavior of almost all vertical limits of $\lambda$-Dirichlet series -- in particular if these series belong to the Banach space (see again \eqref{periodic-coin} and \eqref{l=0})
\begin{equation*}
\mathcal{H}_\infty(\lambda) = H_{\infty,0}^{\lambda}[Re>0] = H_{\infty}^{\lambda}[Re>0].
\end{equation*}
The aim of this subsection is to compare this output with what we now know about the boundary behaviour of functions in $H_{\infty}^{\lambda}[Re>0]$.
We start with a brief introduction to all
relevant notions.

\medskip

\noindent {\bf $\pmb{\lambda}$-Dirichlet groups.} Let $G$ be a compact abelian group and $\beta\colon (\R,+) \to G$ a homomorphism of groups.  Then the pair $(G,\beta)$ is called Dirichlet group, if $\beta$ is continuous and has dense range. In this case  the dual map $\widehat{\beta}\colon \widehat{G} \hookrightarrow \R$ is injective, where we identify $\mathbb{R}=\widehat{(\mathbb{R},+)}$ (note that we do not assume $\beta$ to be injective). Consequently, the  characters $e^{-ix\pmb{\cdot}} \colon \R \to \T$, $x\in \widehat{\beta}(\widehat{G})$, are precisely those which define  a unique $h_{x} \in \widehat{G}$  such that $h_{x} \circ \beta=e^{-ix\pmb{\cdot}}$. In particular, we have that
\begin{equation*}
\widehat{G}=\{h_{x} \mid x \in \widehat{\beta}(\widehat{G}) \}.
\end{equation*}
Now, given a frequency $\lambda$, we call a Dirichlet group $(G,\beta)$ a $\lambda$-Dirichlet group whenever $\lambda \subset \widehat{\beta}(\widehat{G})$, or equivalently whenever for every
$e^{-i\lambda_{n} \pmb{\cdot}} \in \widehat{(\mathbb{R},+)}$ there is (a unique) $h_{\lambda_{n}}\in  \widehat{G}$ with $h_{\lambda_{n}}\circ \beta=e^{-i\lambda_{n} \pmb{\cdot}}$.

For every $u>0$ the Poisson kernel
$P_{u}$
defines a measure $p_{u}$ on $G$, which we call the Poisson measure on $G$
(the push forward measure of $P_u dt$ under $\beta$).
We have $\|p_{u}\|=\|P_{u}\|_{L_{1}(\R)}=1$ and
\begin{equation*}
\text{$\widehat{p_{u}}(h_{x})=\widehat{P_{u}}(x)=e^{-u|x|}$ for all
$x\in \widehat{\beta}(\widehat{G})$.}
\end{equation*}
Finally, recall from \cite[Lemma 3.11]{defantschoolmann2019Hptheory} that, given  a measurable function $g:G \to \C$, then for almost all $\omega \in G$  there are measurable  functions $g_{\omega} \colon \R \to \C$
such that
\[
\text{$g_{\omega}(t)=g(\omega \beta(t))$ almost everywhere on $\R$,}
\]
and if $g\in L_{1}(G)$, then all these $g_\omega$ are locally integrable.

\medskip

\noindent {\bf Hardy spaces on $\pmb{\lambda}$-Dirichlet groups.}
Given a $\lambda$-Dirichlet group $(G,\beta)$ and $1\le p \le \infty$, by
 $H_{p}^{\lambda}(G)$
 we denote the  Hardy space of all functions
$g\in L_{p}(G)$ (recall that being a compact abelian group, $G$ allows a unique normalized Haar measure) having a Fourier transform  supported on $\{h_{\lambda_n} \colon n \in \mathbb{N}\} \subset \widehat{G}$. Being a closed subspace of $L_p(G)$, this clearly defines a Banach space.
 A fundamental fact from   \cite[Theorem 3.20]{defantschoolmann2019Hptheory} is that the definition of $H_{p}^{\lambda}(G)$ in the following sense is independent of the chosen $\lambda$-Dirichlet group $(G,\beta)$: If $(G_1,\beta_1)$ and  $(G_2,\beta_2)$
 are two $\lambda$-Dirichlet groups, then there is a Fourier coefficient preserving, isometric  and linear bijection
  identifying $H_p^\lambda(G_1)$ and $H_p^\lambda(G_1)$, i.e.
 \[
 H_p^\lambda(G_1) = H_p^\lambda(G_2)\,.
 \]
By $\mathcal{B}(f)=\sum \widehat{f}(h_{\lambda_{n}}) e^{-\lambda_{n}s}$ every $f\in H_{p}^{\lambda}(G)$ naturally generates  a $\lambda$-Dirichlet series, and the Hardy space $\mathcal{H}_{p}(\lambda)$ of $\lambda$-Dirichlet series is then defined to be the Banach space of all such Dirichlet series, i.e.
\begin{equation*}
\mathcal{H}_{p}(\lambda)=\{ D=\sum \widehat{f}(h_{\lambda_{n}}) e^{-\lambda_{n}s} \mid f\in H_{p}^{\lambda}(G)\},
\end{equation*}
together with the norm  $\|D\|_{p}=\|f\|_{p}$, whenever $D=\mathcal{B}(f)$.

The Carleson type theorem from \cite[Theorem~2.1]{defant2020variants} proves that, given $D=\sum a_{n}e^{-\lambda_{n}s}\in \mathcal{H}_{p}(\lambda)$ and a $\lambda$-Dirichlet group $(G,\beta)$, for almost every $\omega \in G$ the Dirichlet series $D^{\omega}=\sum a_{n} h_{\lambda_{n}}(\omega)e^{-\lambda_{n}s}$ (a so-called vertical limit of $D$) converges almost everywhere on the boundary line $[\re = 0]$, provided $p>1$.

\medskip

\noindent {\bf Examples.}
 Note that for every $\lambda$ there exists a $\lambda$-Dirichlet group $(G,\beta)$ (which is not unique).
To see a very first example, take the Bohr compactification $\overline{\R}$ together with the mapping
\begin{equation} \label{Bohr-comp}
\beta_{\overline{\R}} \colon \R \to \overline{\R}, ~~ t \mapsto \left[ x \mapsto e^{-itx} \right].
\end{equation}
Then $\beta_{\overline{\R}}$ is continuous and has dense range, and so the pair $(\overline{\R},\beta_{\overline{\R}})$ forms a $\lambda$-Dirichlet group for all $\lambda$'s. We refer to \cite{defantschoolmann2019Hptheory}  for more 'universal examples' of $\lambda$-Dirichlet groups. Looking at the  frequency  $\lambda=(n)=(0,1,2,\ldots)$, the group $G=\T$ together with \[\beta_\T: \R \to \T, \,\,\beta_{\T}(t)=e^{-it},\]
forms  a $\lambda$-Dirichlet group, and  the so-called
Kronecker flow
\begin{equation*}
\label{oscarHelson}
\beta_{\T^{\infty}}\colon \R \to \T^{\infty}, ~~ t \mapsto \mathfrak{p}^{-it}=(2^{-it},3^{-it}, 5^{-it}, \ldots),
\end{equation*}
turns the infinite dimensional torus $\T^{\infty}$ into a  $\lambda$-Dirichlet group
for $\lambda = (\log n)$.
We note that, identifying  $\widehat{\T} = \Z$ and $\widehat{\T^\infty} = \Z^{(\N)}$ (all finite sequences of integers), in the first case $h_n(z) = z^n$ for $z \in \T, n \in \Z$,
and in the second case $h_{\sum \alpha_j \log p_j}(z) = z^\alpha$ for  $z \in \T^\infty, \alpha \in \Z^{(\N)}$.

There is a useful reformulation of the Dirichlet group $(\T^\infty, \beta_{\T^{\infty}})$, which is already presented in the introduction. Denote by $\Xi$ the set of
    all characters $\chi: \N \to \T$, i.e. $\chi$ is completely multiplicative in the sense that
    $\chi(nm)=\chi(n)\chi(m)$ for all $n,m$. So every character is uniquely determined by its values on the primes.
    If we  on  $\Xi$ consider
   pointwise multiplication, then
\begin{align*} \label{idy}
\iota\colon \Xi \to \T^{\infty}, ~~ \chi \mapsto \chi(\mathfrak{p}) = (\chi(p_{n})),
\end{align*}
 is a  group isomorphism which turns  $\Xi$ into a compact abelian group. The Haar measure $d \chi$ on $\Xi$  is the push forward of the normalized Lebesgue measure $dz$ on $\T^\infty$   through $\iota^{-1}$. Hence also $\Xi$ together with
\begin{equation*}
\label{oscar2}
\beta_{\Xi}\colon \R \to \Xi, ~~ t \mapsto [p_k \to p_{k}^{-it}],
\end{equation*}
forms a $(\log n)$-Dirichlet group. Note that
$\mathbb{Z}^{(\mathbb{N})} =\widehat{\Xi}, \, \,\alpha  \mapsto \varphi $, where $\varphi (\chi) =\chi(\mathfrak{p})^\alpha$
for  $\chi \in \Xi$.

\medskip

\noindent {\bf Applying  Carleson's theorem.}
Fix some $f\in H_{\infty}^{\lambda}[\re > 0]$ and $\omega \in G$, where $(G,\beta)$ is a $\lambda$-Dirichlet group.
From \cite[Theorem~2.16]{defant2020riesz} (see also again \eqref{periodic-coin}) we know that there is an  isometric and coefficient preserving identity
\begin{equation}\label{johannahatbaldferien}
H_{\infty}^{\lambda}[\re > 0] =  \,H_{\infty}^\lambda(G)\,,\,\,\, f \mapsto g\,.
\end{equation}
Hence,
we deduce from \cite[Proposition~4.3]{defantschoolmann2019Hptheory} that there is a unique function
\begin{equation*}
f^\omega \in H_{\infty}^{\lambda}[\re > 0]
\end{equation*}
such that $a_n(f^\omega) = a_n(f) h_{\lambda_n}(\omega)$ for all $n$
and $\|f^\omega\|_\infty = \|f\|_\infty$. We call the function $f^\omega$   vertical limit of $f$ with respect to $\omega$ (and refer to
\cite[Proposition~4.6]{defantschoolmann2019Hptheory} which motivates this name).
Then by  Theorem~\ref{41} for each $k >0$ and $s \in [\re >0]$ the limit
\[
f^\omega(s) = \lim_{x \to \infty}\sum_{\lambda_n \leq x} a_n(f) h_{\lambda_n}(\omega) \Big( 1- \frac{\lambda_n}{x} \Big)^k
e^{-\lambda_n s}
\]
exist, i.e. $f^\omega$ is $(\lambda, k)$-Riesz summable on $[\re >0]$ for every $k >0$.

This  is in general not true for $k=0$ and all $\omega \in G$ (look at $\omega = e$, the unit in $G$, and some $\lambda$ not satisfying Bohr's theorem) and not true for $k=0$ and
all $s \in  [\re =0]$ (look at $\omega = e$, $\lambda = (\log n)$, and  the Bayart-Konyagin-Queff\'{e}lec example).

But an application of Carleson's theorem in the form given in \cite[Theorem~2.2]{defant2020variants} shows that    for each $f\in H_{\infty}^{\lambda}[\re > 0]$ the vertical
limits $f^\omega$ for almost all $\omega \in G$ are $(\lambda,0)$-Riesz-summable almost everywhere on the imaginary axis. Moreover, as we show now, if $g\in H_{\infty}^{\lambda}(G)$ is the function uniquely assigned  to $f$ in the sense of \eqref{johannahatbaldferien}, then  for almost all $\omega \in G$ the horizontal limit $(f^\omega)^\ast$ of the vertical limit
$f^\omega$ equals almost everywhere on $\mathbb{R}$ the 'restriction' $g_\omega(\tau) =  g(\omega \beta(\tau)),\, \tau  \in \mathbb{R}$.

\begin{Theo} \label{Helsoncase}  Let $f\in H_{\infty}^{\lambda}[\re > 0]$. Then for every $\lambda$-Dirichlet group  $(G,\beta)$, almost every $\tau\in \R$ and almost every $\omega \in G$
\[
\lim_{x\to \infty} \sum_{\lambda_n \leq x} a_n(f)h_{\lambda_n}(\omega)e^{-i\lambda_n \tau}=(f^\omega)^\ast(i\tau) \,.
\]
Moreover, if $g \in H_\infty^\lambda(G)$ is the unique function such that $a_n(f) =\widehat{g}(h_{\lambda_n}) $
for all $n$, then  for almost all $\tau\in \R$ and almost all $\omega \in G$
\[
g_\omega(\tau) = (f^\omega)^\ast(i\tau)\,.
\]
\end{Theo}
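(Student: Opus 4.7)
The plan is to combine the Carleson-type convergence theorem from \cite[Theorem~2.1]{defant2020variants} with Theorem~\ref{AE} applied to the vertical limits $f^\omega$. Carleson supplies the convergence of the ordinary partial sums (i.e., $(\lambda,0)$-Riesz summability) for almost every $\omega$ and almost every $\tau$, while Theorem~\ref{AE} identifies the limit in terms of the horizontal limit function $(f^\omega)^\ast$ once combined with the standard consistency of Riesz summation.

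First I would let $g\in H_\infty^\lambda(G)$ correspond to $f$ under \eqref{johannahatbaldferien}, and form $D=\mathcal{B}(g)=\sum a_n(f)e^{-\lambda_n s}$, which lies in $\mathcal{H}_\infty(\lambda)\subset \mathcal{H}_p(\lambda)$ for any $1<p<\infty$. By \cite[Theorem~2.1]{defant2020variants}, for almost every $\omega\in G$ the vertical limit $D^\omega=\sum a_n(f)h_{\lambda_n}(\omega)e^{-\lambda_n s}$ converges at $s=i\tau$ for almost every $\tau\in\R$; equivalently,
\[
L(\omega,\tau):=\lim_{x\to\infty}\sum_{\lambda_n\le x} a_n(f)h_{\lambda_n}(\omega)e^{-i\lambda_n\tau}
\]
exists for almost every $(\omega,\tau)\in G\times\R$.

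Next, for each such $\omega$ the vertical limit $f^\omega\in H_\infty^\lambda[\re>0]$ has Bohr coefficients $a_n(f^\omega)=a_n(f)h_{\lambda_n}(\omega)$; applying Theorem~\ref{AE} to $f^\omega$ with any fixed $k>0$ gives $\lim_{x\to\infty}R_x^{\lambda,k}(f^\omega)(i\tau)=(f^\omega)^\ast(i\tau)$ for almost every $\tau$. By Fubini this holds for almost every $(\omega,\tau)$. I would then invoke the classical consistency property of Riesz summation (see \cite{HardyRiesz}): whenever a $\lambda$-Dirichlet series is $(\lambda,0)$-Riesz summable at a point, it is $(\lambda,k)$-Riesz summable there to the same value for every $k>0$. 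Applied to $D^\omega$ at $s=i\tau$ this forces $L(\omega,\tau)=(f^\omega)^\ast(i\tau)$ almost everywhere, which is the first assertion.

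For the second identity $g_\omega(\tau)=(f^\omega)^\ast(i\tau)$, the tool is the Poisson-type representation on $G$: under the isometric identification $f\leftrightarrow g$ from \eqref{johannahatbaldferien}, one has $f^\omega(u+i\tau)=(T_\omega g * p_u)(\beta(\tau))$, where $T_\omega g(\eta)=g(\omega\eta)$ and $p_u$ denotes the Poisson measure on $G$. Restricting to the line through $\omega$ and letting $u\to 0$, a standard approximate-identity argument on $\R$ applied to the locally integrable function $g_\omega$ (Lebesgue differentiation along the flow $\beta$, via \cite[Lemma~3.11]{defantschoolmann2019Hptheory}) yields convergence, at almost every $\tau$, to $(T_\omega g)(\beta(\tau))=g(\omega\beta(\tau))=g_\omega(\tau)$; combined with Corollary~\ref{existencehorizontalA} this identifies the horizontal limit as $(f^\omega)^\ast(i\tau)=g_\omega(\tau)$ almost everywhere. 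The main obstacle I foresee is the careful bookkeeping of null sets required to pass from 'almost every $\tau$ for each fixed $\omega$' to 'almost every $(\omega,\tau)$'; this will rely on Fubini together with the joint measurability of $(\omega,\tau)\mapsto g_\omega(\tau)$ and the measurable selection of vertical limits from \cite{defantschoolmann2019Hptheory}. Otherwise the argument is a clean assembly of Theorem~\ref{AE}, Carleson's theorem, and the Poisson convolution formula underlying Theorem~\ref{convolution1}.
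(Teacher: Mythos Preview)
Your argument is correct, but the route differs from the paper's in two places.

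For the first identity, the paper does not invoke Theorem~\ref{AE}. Instead it fixes $g\in H_\infty^\lambda(G)$, uses the Carleson-type theorem in the form \cite[Theorem~2.2]{defant2020variants} to get $g=\lim_{x\to\infty}\sum_{\lambda_n\le x}a_n(f)h_{\lambda_n}$ almost everywhere on $G$, and then \cite[Lemma~1.4]{defant2020riesz} to pass to the flow lines, so that the Carleson limit is identified from the outset as $g_\omega(\tau)$. The equality $(f^\omega)^\ast(i\tau)=g_\omega(\tau)$ is then obtained by writing $(f^\omega)^\ast(i\tau)=\lim_{\varepsilon\to 0}\lim_{x\to\infty}R_x^{\lambda,k}(g\ast p_\varepsilon)(\omega\beta(\tau))$ and interchanging the two limits via \cite[Proposition~2.4]{defant2020riesz}, followed by the same consistency step you use. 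Your approach replaces this limit interchange by a direct appeal to Theorem~\ref{AE} applied to each $f^\omega$, which is arguably more in the spirit of the present paper and avoids the external Proposition~2.4; the price is the extra Fubini bookkeeping you mention.

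For the second identity you set up a separate Poisson approximate-identity argument on the line. This works, but it is redundant: the version of Carleson you quote (together with \cite[Lemma~1.4]{defant2020riesz}) already tells you that the limit $L(\omega,\tau)$ equals $g_\omega(\tau)$, so once your first part gives $L(\omega,\tau)=(f^\omega)^\ast(i\tau)$ the second identity is immediate. If you keep the Poisson route, note that the formula $f^\omega(u+i\tau)=(T_\omega g\ast p_u)(\beta(\tau))$ is not Theorem~\ref{convolution1} (which lives on $\R$) but the group-level identification behind \eqref{johannahatbaldferien}; you should cite \cite{defantschoolmann2019Hptheory} or \cite{defant2020riesz} explicitly for it.
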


\begin{proof}
Let $g \in H_\infty^\lambda(G)$ be the unique function such that $a_n(f) =\widehat{g}(h_{\lambda_n}) $
for all $n$. Then $g \in H_2^\lambda(G)$, and by a variant of Carleson's  convergence theorem
  from \cite[Theorem~2.2]{defant2020variants} we know that
    \[
  g = \lim_{x\to \infty} \sum_{\lambda_n \leq x} a_n(f)h_{\lambda_n}\,\,\,\,\text{ almost everywhere on $G$} \,.
  \]
  Consequently, for almost all $\omega \in G$ by \cite[Lemma~1.4]{defant2020riesz} the limit
  \[
  g_\omega(\tau)=   \lim_{x\to \infty} \sum_{\lambda_n \leq x} a_n(f)h_{\lambda_n}(\omega)e^{-i \lambda_n \tau}
  \]
  exists for almost everywhere $\tau \in \mathbb{R}$. But by Corollary~\ref{existencehorizontalA}
  (used in the first equation of the following caculation) and \cite[Proposition 2.4]{defant2020riesz}
  (used in the fourth step to  change  limits), for almost all $\omega \in G$ and for almost all $\tau \in \mathbb{R}$ we have
  \begin{align*}
           (f^\omega)^\ast(i\tau)
     &
     =
     \lim_{\varepsilon \to 0} f^\omega( \varepsilon + i\tau)
     \\&
     =
     \lim_{\varepsilon \to 0}
     \lim_{x\to \infty} \sum_{\lambda_n \leq x} a_n(f)h_{\lambda_n}(\omega) \Big( 1- \frac{\lambda_n}{x} \Big)^k e^{-\varepsilon\lambda_n}e^{-i\lambda_n \tau}
     \\&
     =
     \lim_{\varepsilon \to 0}
     \lim_{x\to \infty} R_x^{\lambda,k} \big(g \ast  p_\varepsilon\big)  (\omega\beta(\tau))
     \\&
     =
          \lim_{x\to \infty}
          \lim_{\varepsilon \to 0}
          R_x^{\lambda,k} \big(g \ast  p_\varepsilon\big)  (\omega\beta(\tau))
          \\&
     =
          \lim_{x\to \infty} \sum_{\lambda_n \leq x} a_n(f)h_{\lambda_n}(\omega)\Big( 1- \frac{\lambda_n}{x} \Big)^k e^{-i\lambda_n \tau}
                              \\&
     =
          \lim_{x\to \infty} \sum_{\lambda_n \leq x} a_n(f)h_{\lambda_n}(\omega) e^{-i\lambda_n \tau}
               = g_\omega(\tau)\,,
  \end{align*}
 where the penultimate equation follows from the  fact that a $(\lambda,\ell)$-Riesz summable series is $(\lambda,k)$-Riesz summable
 for each $0\leq \ell \leq k$ with the same limit (see e.g \cite[Theorem 16, p. 29]{HardyRiesz}).  This completes the argument.
\end{proof}

We again believe that the ordinary case  is of independent interest.

\begin{Coro} \label{Helsoncase-ord}  Let $f\in H_{\infty}^{(\log n)}[\re > 0]$. Then for almost every $\tau\in \R$ and almost every $\chi \in \Xi$ we have
\[
\lim_{x\to \infty} \sum_{\lambda_n \leq x} a_n(f) \chi(n) n^{-i\tau }=(f^\chi)^{*}(i\tau)\,.
\]
Moreover, if $g \in H_\infty^{(\log n)}(\Xi)$ is the function  associated to $f$, i.e. $ \widehat{g}(\alpha) = a_n(f)$ for
 $n= \mathfrak{p}^\alpha$ and $ \widehat{g}(\alpha) = 0$ else, then  for almost all $\tau\in \R$ and almost all $\chi \in \Xi$
 \[
 (f^\chi)^{*}(i\tau) = g( n \mapsto \chi(n)n^{-i\tau})\,.
 \]
\end{Coro}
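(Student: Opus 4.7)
The plan is to obtain Corollary~\ref{Helsoncase-ord} as a direct specialization of Theorem~\ref{Helsoncase} to the concrete $(\log n)$-Dirichlet group $(\Xi,\beta_{\Xi})$ introduced in the examples of Section~\ref{carleson}. Since Theorem~\ref{Helsoncase} has already performed the analytic work (the Carleson-type statement in $\mathcal{H}_{p}(\lambda)$ and the exchange of limits involving Poisson means), what remains is a translation of the abstract characters $h_{\lambda_{n}}$ and the abstract restriction $g_{\omega}$ into the arithmetic language of completely multiplicative functions on the primes.

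For the first identity I would recall that for $\omega = \chi \in \Xi$ the dual characters are given by $h_{\log n}(\chi) = \chi(n)$, which is consistent with $h_{\log n}\circ \beta_{\Xi}(\tau) = e^{-i\tau\log n} = n^{-i\tau}$. Substituting $h_{\lambda_{n}}(\omega) = \chi(n)$ together with $e^{-i\lambda_{n}\tau} = n^{-i\tau}$ into the first conclusion of Theorem~\ref{Helsoncase} immediately reads off the claimed almost everywhere convergence to $(f^{\chi})^{\ast}(i\tau)$.

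For the second identity I would observe that $\beta_{\Xi}(\tau)$ is precisely the character $n \mapsto n^{-i\tau}$, so that pointwise multiplication in the group $\Xi$ gives $(\chi\,\beta_{\Xi}(\tau))(n) = \chi(n)\,n^{-i\tau}$. The second conclusion of Theorem~\ref{Helsoncase} then yields $(f^{\chi})^{\ast}(i\tau) = g_{\chi}(\tau) = g(\chi\,\beta_{\Xi}(\tau)) = g(n \mapsto \chi(n) n^{-i\tau})$ for almost all $\chi$ and $\tau$. To confirm that the $g \in H_{\infty}^{(\log n)}(\Xi)$ appearing here admits the Fourier description stated in the corollary, I would invoke the identification $\widehat{\Xi} = \Z^{(\N)}$ via $\alpha \mapsto [\chi \mapsto \chi(\mathfrak{p})^{\alpha}]$, which coincides with $h_{\log n}$ exactly when $n = \mathfrak{p}^{\alpha}$; since every $n \in \N$ factors uniquely in this form with $\alpha \in \mathbb{N}_{0}^{(\mathbb{N})}$, the relation $\widehat{g}(h_{\lambda_{n}}) = a_{n}(f)$ furnished by Theorem~\ref{Helsoncase} transfers into $\widehat{g}(\alpha) = a_{n}(f)$ for $n=\mathfrak{p}^{\alpha}$ and $\widehat{g}(\alpha) = 0$ otherwise.

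No serious obstacle is anticipated, as the argument is essentially a bookkeeping exercise in the identifications established in Section~\ref{carleson}; the only point requiring mild care is to ensure that the two 'almost every' statements in Theorem~\ref{Helsoncase} remain valid when $G$ is replaced by $\Xi$, which is automatic since the Haar measure on $\Xi$ is, by definition, the push-forward of the Haar measure on $\T^{\infty}$ through the group isomorphism $\iota^{-1}$.
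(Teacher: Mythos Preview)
Your proposal is correct and matches the paper's approach: the paper does not give an explicit proof of Corollary~\ref{Helsoncase-ord}, presenting it directly as the ordinary case of Theorem~\ref{Helsoncase}, and your translation of the abstract data $h_{\lambda_n}(\omega)$, $\beta(\tau)$, and $g_\omega$ into the concrete quantities $\chi(n)$, $n\mapsto n^{-i\tau}$, and $g(n\mapsto \chi(n)n^{-i\tau})$ via the identifications of Section~\ref{carleson} is exactly what is intended.
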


We finally illustrate  Theorem~\ref{Helsoncase} looking at bounded, holomorphic functions on the infinite dimensional
polydisc $B_{c_0}$. Take some $f\in H_{\infty}(B_{c_{0}})$. Then $\mathbb{T}^\infty$ may be seen as the 'distinguished
boundary' of $B_{c_0}$, and we may ask to which extent $f$ has boundary values.

We deduce as a consequence of Theorem~\ref{APcase}  (together with  \eqref{H=H}, \eqref{booky}, and \eqref{DD=HH}) that, given $k>0$, for almost every $t\in \R$
\begin{equation*} \label{ordinary1}
\lim_{\varepsilon\to 0}f\big(\mathfrak{p}^{-(\varepsilon +it)}\big)= \lim_{x\to \infty}  \sum_{\mathfrak{p}^\alpha < x} \frac{\partial^{\alpha}f(0)}{\alpha!} \Big( 1- \frac{\log \mathfrak{p}^\alpha}{x}  \Big)^k \frac{1}{\mathfrak{p}^{i\alpha t}}\,.
\end{equation*}

What can we in this case conclude from Theorem~\ref{Helsoncase}? To see this,
let $g \in H_{\infty}(\mathbb{T}^\infty)$
be associated to $f$ in the sense that $\widehat{g}(\alpha)= \frac{\partial^{\alpha}f(0)}{\alpha!}$ for $\alpha \in \mathbb{N}_0^{(\mathbb{N})}$, and $\widehat{g}(\alpha)= 0$ else.
Then by Theorem~\ref{Helsoncase}  for almost every $z\in \T^{\infty}$ and almost all $t \in \mathbb{R}$
\begin{equation*}\label{ordinary2}
g\big(z \mathfrak{p}^{-it}  \big)= \lim_{\varepsilon\to 0}f\big(z\mathfrak{p}^{-(\varepsilon +it)}\big)= \lim_{x\to \infty}  \sum_{\mathfrak{p}^\alpha < x} \frac{\partial^{\alpha}f(0)}{\alpha!}z^\alpha \frac{1}{\mathfrak{p}^{i\alpha t}}.
\end{equation*}

\section{Appendix} \label{Appendix}

\subsection{Elaborating Corollary \ref{convolution1NEW}} \label{AppendixB}
Recall from Corollary \ref{convolution1NEW}  that the mapping
\begin{equation*}
T \colon H_{\infty,\ell}^{\lambda}[\re > 0]\hookrightarrow L_{\infty}(\R), ~~f \mapsto\frac{f^\ast(i\cdot)}{(1+i\cdot)^{\ell}}
\end{equation*}
defines an isometric embedding. Starting with the following definition, we in  Theorem \ref{svwhsv} below  prove an internal description of the range of $T$:
Given a  frequency $\lambda$ and $\ell\ge 0$,
$$H_{\infty,\ell}^{\lambda}(\R)$$
 denotes  the subspace of all $g\in L_{\infty}(\R)$ for which  there are  a $\lambda$-Dirichlet series $D=\sum a_{n}e^{-\lambda_{n}s}$ and $m>0$ such that for every $u>0$
\begin{equation} \label{conditionB}
\lim_{x\to \infty} \sup_{t\in \R} \Big|\frac{(1+u+it)^{\ell}[g*P_{u}](t)}{(1+it)^{\ell}}-\frac{R_{x}^{\lambda,m}(D)(u+it)}{(1+it)^{\ell}}\Big| =0.
\end{equation}
For $\ell=0$ we write $$H_{\infty,0}^{\lambda}(\R)=H_{\infty}^{\lambda}(\R)\,.$$
The following result is an elaboration of  Corollary \ref{convolution1NEW}.

\begin{Theo} \label{svwhsv} Let $\ell\ge 0$ and $\lambda$ an arbitrary frequency. Then the mapping
\begin{equation*}
\Psi\colon H_{\infty,\ell}^{\lambda}(\R) \to H_{\infty,\ell}^{\lambda}[Re>0],~ \Psi(g)(u+it)=(1+u+it)^{\ell}[g*P_{u}](t).
\end{equation*}
defines a bijective isometry with inverse
\begin{equation*}
\Psi^{-1}\colon  H_{\infty,\ell}^{\lambda}[Re>0] \to H_{\infty,\ell}^{\lambda}(\R) ,~ \Psi(f)(t)=\frac{f^{*}(it)}{(1+it)^{\ell}},
\end{equation*}
where $f^{*}$ denotes the horizontal limit of $f$.
\end{Theo}

\begin{proof}
Fix $g\in H_{\infty,\ell}^{\lambda}(\R)$, and let $D=\sum a_{n}e^{-\lambda_{n}s}$ be some  Dirichlet series for which \eqref{conditionB} holds. Consider the function
\begin{equation*}
f(u+it)=(1+u+it)^{\ell}[g*P_{u}](t) \colon [Re>0] \to \C.
\end{equation*}
We claim that $f$ is holomorphic and that $D$ is the $\lambda$-Riesz germ of $f$. Indeed,  condition \eqref{conditionB} implies that for every $s\in [Re>0]$
\begin{equation} \label{convergenceAppendixB}
f(s)=\lim_{x\to \infty} R_{x}^{\lambda,m}(D)(s).
\end{equation}
Then the sequence $(R_{x}^{\lambda,m}(D))_x$ converges to $f$ uniformly on all compact subsets of $[Re>0]$ (see e.g. \cite[Theorem 2.9]{DefantSchoolmann6}), and so $f$ is holomorphic on $[Re>0]$ and  $D$ is the $\lambda$-Riesz germ of $f$.  Moreover,
\begin{align*}
\|f\|_{H_{\infty,\ell}^{\lambda}[Re>0]}=\sup_{u>0} \sup_{t\in \R} \Big| \frac{f(u+it)}{(1+u+it)^{\ell}} \Big| = \sup_{u>0} \|g*P_{u}\|_{L_{\infty}(\R)} \le \|g\|_{\infty},
\end{align*}
which eventually shows that $f\in H_{\infty,\ell}^{\lambda}[Re>0]$.
Let now $f\in H_{\infty,\ell}^{\lambda}[Re>0]$. Then by Corollary \ref{convolution1NEW} the function
$g = \frac{f^\ast(i\cdot)}{(1+i\cdot)^{\ell}}$ belongs to $L_{\infty}(\R)$ with $\|g\|_{\infty}=\|f\|_{\infty,\ell}$. It remains to show that $g\in H_{\infty,\ell}^{\lambda}(\R)$.  To do this, let  $D$ be the $\lambda$-Riesz germ of $f$, and note that by Theorem \ref{convolution1}
for every $u+it \in [\re > 0]$
\begin{equation}
(1+u+it)^{\ell}[g*P_{u}](t)=f(u+it).
\end{equation}
Then, by Theorem \ref{41},  \cite[Theorem 3.17]{DefantSchoolmann6} and continuity (of the function $f(u+\cdot)$ on $[Re\ge 0]$),  we for every $u>0$ and $k>\ell$ obtain that
\begin{align*}
0&=  \lim_{x\to \infty} \sup_{s\in [Re>0]} \Big | \frac{f(u+s)}{(1+s)^{\ell}}-\frac{R^{\lambda,k}_{x}(D)(u+s)}{(1+s)^{\ell}} \Big | \\ & = \lim_{x\to \infty} \lim_{v \to 0} \sup_{t\in \R} \Big | \frac{f(u+v+it)}{(1+v+it)^{\ell}}-\frac{R^{\lambda,k}_{x}(D)(u+v+it)}{(1+v+it)^{\ell}} \Big | \\ & \ge  \lim_{x\to \infty} \sup_{t\in \R} \Big | \frac{f(u+it)}{(1+it)^{\ell}}-\frac{R^{\lambda,k}_{x}(D)(u+it)}{(1+it)^{\ell}} \Big | \\ & =   \lim_{x\to \infty} \sup_{t\in \R} \Big | \frac{(1+u+it)^{\ell}}{(1+it)^{\ell}}[g*P_{u}](t)-\frac{R^{\lambda,k}_{x}(D)(u+it)}{(1+it)^{\ell}} \Big |,
\end{align*}
which proves that $g\in H_{\infty,\ell}^{\lambda}(\R)$.
Finally, we show that  almost everywhere on $\R$ we have
\begin{equation} \label{selenski}
\Psi(g)^{\ast}(t)=(1+it)^{\ell} g(t),
\end{equation}
and so $\Psi^{-1}(\Psi(g))=g$. Obviously  \eqref{selenski} is equivalent to the fact that almost everywhere on $\R$
\begin{equation} \label{standardLinfinityresult}
\lim_{u\to 0} [P_{u}*g](t)= g(t).
\end{equation}
But since   $g\in L_{\infty}(\R)$, this is standard, and we only for  the sake of completeness add an argument. Indeed, define $\alpha=g\chi_{[-N,N]}$ and $\beta=g-g\chi_{[-N,N]}.$ Then $\beta$ vanishes on $[-N,N]$, and so  we almost everywhere on $]-N,N[$ have
\begin{equation*}
\lim_{u\to 0} [P_{u}*\beta](t)=\beta(t)=0
\end{equation*}
(see e.g. \cite[Theorem 1.2.19, p. 27]{GrakakosClassic}). Additionally, since $\alpha\in L_{1}(\R)$,  almost everywhere on $]-N,N[$
\begin{equation*}
\lim_{u\to 0} [P_{u}*\alpha](t)=\alpha(t)=g(t).
\end{equation*}
(see e.g. Theorem \cite[Theorem 2.1.14., p. 94]{GrakakosClassic} and also  Example 2.1.15, p.95).
Together, almost everywhere on $]-N,N[$
\begin{equation*}
\lim_{u\to 0} [P_{u}*g](t)=\lim_{u\to 0} [P_{u}*\alpha](t)+[P_{u}*\beta](t)=g(t).
\end{equation*}
Now collecting countably many zero sets, proves \eqref{selenski}, which finishes the proof.
\end{proof}

The case $\ell=0$ is of special interest. In view of \eqref{l=0}  we may reformulate Theorem~\ref{svwhsv} as follows.

\begin{Coro} \label{svwhsvAA} Let  $\lambda$ be a frequency. Then the mapping
\begin{equation*}
\Psi\colon H_{\infty}^{\lambda}(\R) \to H_{\infty}^{\lambda}[Re>0],~ \Psi(g)(u+it)=[g*P_{u}](t).
\end{equation*}
is an bijective isometry with inverse
\begin{equation*}
\Psi^{-1}\colon  H_{\infty}^{\lambda}[Re>0] \to H_{\infty}^{\lambda}(\R) ,~ \Psi(f)(t)=f^{*}(it)\,.
\end{equation*}
\end{Coro}

Recall from Section~\ref{C} that the Banach space $H_{\infty}^{\lambda}[Re>0]$ is defined in terms of almost periodicity.
In the remaining part of this subsection we show how to give a similar  description for functions in  $H_{\infty}^{\lambda}(\R)$.

By $AP(\R)$ we denote the Banach space of all almost periodic functions (equipped with the sup-norm). If $g\in AP(\R)$, then the uniquely assigned Bohr coefficients of $g$ are given by
\begin{align} \label{coefBohr}
a_{x}(g)=\lim_{T\to \infty} \frac{1}{2T} \int_{-T}^{T} g(t)e^{ixt} dt, \quad x \in \mathbb{R}\,;
\end{align}
for functions in $H_{\infty}^{\lambda}[Re>0]$ compare this with  \eqref{bohrcoeffintro}.
Given a frequency $\lambda=(\lambda_{n})$, by $$AP^{\lambda}(\R)$$ we denote all functions $g\in AP(\R)$ for which the Bohr coefficients are supported on $\lambda$, i.e. the coefficient $a_{x}(g)$ vanishes, whenever $x \notin \{\lambda_{n} \mid n \in \N\}$.
We need a simple observation.

\begin{Lemm} \label{apholo}
Let $g\in AP^{\lambda}(\R)$. Then $g*P_{u}\in AP^{\lambda}(\R)$ for every $u>0$.
\end{Lemm}

\begin{proof}
Recall that all trigonometric polynomials are dense in $AP(\R)$ (see \cite[Theorem~1.5.5]{queffelec2013diophantine}).
  So, if $(p_n)$ is a sequence of trigonometric polynomials such that $\lim_{n \to \infty} p_{n }~=~g$ uniformly on $\mathbb{R}$, then also $\lim_{n \to \infty} p_{n}*P_{u} = g*P_{u}$ uniformly on $\mathbb{R}$, and so $g*P_{u}\in AP(\R)$ for every $u>0$.
  Checking on trigonometric polynomials and using density, shows
  \begin{equation}\label{lwiw}
    a_{x}(g*P_{u})=e^{-ux} a_{x}(g)\,, ~x\in \R,
  \end{equation}
  which completes the argument.
\end{proof}

The following result characterizes functions in $H_{\infty}^{\lambda}(\R)$ in terms of almost periodicity.

\begin{Theo} \label{casezero}
Let $\lambda$ be a frequency and $g\in L_{\infty}(\R)$. Then $g \in H_{\infty}^{\lambda}(\R)$ if and only if  $g*P_{u}\in AP^{\lambda}(\R)$ for every $u>0$.
Moreover, in this case
\[
\|g\|_\infty = \sup_{u >0} \| g*P_u  \|_\infty\,,
\]
and there is a $\lambda$-Dirichlet series such that for all $k >0$
and all  $u>0$
\begin{equation*}
\lim_{x\to \infty} \sup_{t\in \R} \Big|[g*P_{u}](t)-R_{x}^{\lambda,k}(D)(u+it)\Big| =0.
\end{equation*}

\end{Theo}

We prepare the proof with the following lemma.

\begin{Lemm} \label{apholoA}
The mapping
\begin{equation} \label{apmappingA}
\Phi: AP^{\lambda}(\R)\hookrightarrow H^{\lambda}_{\infty}[\re > 0],~ g \mapsto f,
\end{equation}
where
\[
\text{$f(u+it):=[g*P_{u}](it)$ for all $u+it \in [\re >0]$}\,,
\]
 defines an isometric and  coefficient preserving embedding.
\end{Lemm}
\begin{proof}
  Recall first that a function  on $\mathbb{R}$ is almost periodic if and only if it
  has a unique extension $\widetilde{g}$ to the Bohr compactification $\overline{\mathbb{R}}$
  (see again \cite[Theorem~1.5.5]{queffelec2013diophantine}). This in particular implies  that the mapping
  \[
  I: AP^\lambda(\mathbb{R}) \to C(\overline{\mathbb{R}}) \,, \quad g \mapsto \widetilde{g}
  \]
 is an isometric embedding (here it is used that $\beta_{\overline{\R}}$ from   \eqref{Bohr-comp} has dense range). Moreover, $I$ preserves Bohr- and Fourier coefficients, i.e. for all $x \in \mathbb{R}$
  \[
  a_{x}(g)  = \widehat{\widetilde{g}}(h_{x})
    \]
 (see e.g. \cite[Proposition~3.10]{defantschoolmann2019Hptheory}). So the range of $I$ is in fact contained in
 $H_{\infty}^\lambda(\overline{\mathbb{R}})$. On the other hand, we know from \eqref{johannahatbaldferien} that there is an  isometric, coefficient preserving bijection
\begin{equation*}
  J: H_{\infty}^\lambda(\overline{\mathbb{R}})  \to  H_{\infty}^{\lambda}[\re > 0] \,, \quad g \mapsto f\,.
\end{equation*}
Combining, we get an isometric, coefficient preserving embedding
\[
J\circ I: AP^{\lambda}(\R)\to H^{\lambda}_{\infty}[\re > 0]\,, \quad g \mapsto f\,,
\]
and then it remains to show that
\begin{equation} \label{minsk}
  [J\circ I](g)(u+it)=[g*P_{u}](it)\,, \quad u + it \in [\re > 0]\,.
\end{equation}
Indeed, fix some $u > 0$. Then for all $n$
\[
a_{\lambda_n} (g) = a_{\lambda_n} ([J\circ I]g)
=
\lim_{T \to \infty }  \frac{1}{2T}  \int_{-T}^{T}  \big([J\circ I]g\big)(u+it)e^{(u+it)\lambda_n }dt\,,
\]
(see again \eqref{bohrcoeffintro}), and hence by  Lemma~\ref{apholo}, \eqref{lwiw} and \eqref{coefBohr}
\[
a_{\lambda_n} (g \ast P_u) = a_{\lambda_n} (g) e^{-u \lambda_n} = a_{\lambda_n}(\big([J\circ I]g\big)(u+i\cdot))\,.
\]
Since almost periodic functions are uniquely determined  by their Bohr coefficients, \eqref{minsk} is proved.
    \end{proof}

\begin{proof}[Proof of Theorem \ref{casezero}]
If $g\in H_{\infty}^{\lambda}(\R)$, then $g*P_{u}$ by definition is the uniform limit of $\lambda$-Dirichlet polynomials, and so clearly $g*P_{u}\in AP^{\lambda}(\R)$ for every $u>0$.

Conversely, assume that $g \in L_\infty(\mathbb{R})$ such that $g*P_{u}\in AP^{\lambda}(\R)$ for every $u>0$. We define the bounded function $f(u+it):=(g*P_{u})(t)$, $u+it\in [Re>0]$, and claim that $f\in H_{\infty}^{\lambda}[Re>0]$ with $\Psi^{-1} f=g$, which finishes the proof by Corollary \ref{svwhsvAA}. To do this, we apply
Lemma~\ref{apholoA} to $h_n=g*P_{\frac{1}{n}}, \, n \in \mathbb{N}$, and  obtain that the function $f_n $ defined by
\[
f_n(u + it) = \big((g*P_{\frac{1}{n}}) *P_{u}\big)(t) = (g*P_{\frac{1}{n}+u})(t)\,,  \quad \text{ $u +it \in [\re>0]$}
\]
belongs to $H_{\infty}^{\lambda}[Re>0]$. Since for every $s\in [Re>\frac{1}{n}]$ and $n \in \mathbb{N}$
\begin{equation*}
f(s)=f(s-\frac{1}{n}+\frac{1}{n})=f_{n}(s-\frac{1}{n})\,,
\end{equation*}
 we see that $f$ is holomorphic on $[Re>0]$. Hence, being bounded and almost periodic on all vertical lines $[Re=\sigma]$, the function  $f$ belongs to $H_{\infty}^{\lambda}[Re>0]$. Moreover, the argument for  \eqref{standardLinfinityresult}  implies
 that for almost every $t\in \R$
\begin{align*}
  (\Psi^{-1} f)(t)
  &
  = f^\ast(it)
    = \lim_{u \to 0} f(u+it)
= \lim_{u \to 0}   (g*P_{u})(t)  = g(t),
\end{align*}
which finishes the proof of the  first claim. Finally, note that the first statement of the 'moreover part'
 then is also evident, whereas  the second follows from Theorem~\ref{41} looking at the $\lambda$-Riesz germ $D$ of $f$.
\end{proof}

In view of Theorem \ref{Helsoncase} and Corollary \ref{svwhsvAA} we add the following observation.
\begin{Coro}Let $(G,\beta)$ be a $\lambda$-Dirichlet group and $g\in H_{\infty}^{\lambda}(G)$. Then $g_{\omega}\in H_{\infty}^{\lambda}(\R)$ for almost every $\omega \in G$.
\end{Coro}
\begin{proof}
For almost every $\omega \in G$ we know from \cite[Lemma 3.11]{defantschoolmann2019Hptheory} that $g_{\omega} \in L_{\infty}(\R)$, and moreover by Theorem \ref{Helsoncase}, that $g_{\omega}$ is the horizontal limit of a function $f^{\omega} \in H_{\infty}^{\lambda}[Re>0]$. Hence by Corollary \ref{svwhsvAA}, we obtain $g_{\omega} \in H_{\infty}^{\lambda}(\R)$ for almost every $\omega \in G$.
\end{proof}
We complete this section by another observation, which states that in the ordinary case $H_{\infty}^{(\log n)}(\R)$ may be describe in terms of Ces\`{a}ro limits.
\begin{Coro}A function $g\in L_{\infty}(\R)$ belongs to $H_{\infty}^{(\log n)}(\R)$ if only only if there is an ordinary Dirichlet series $D=\sum a_{n}n^{-s}$ such that for every $u>0$ the Ces\`{a}ro means of $D$ converges uniformly on $[Re=u]$ to $g*P_{u}$, i.e. for every $u>0$
\begin{equation} \label{cesaro}
\lim_{x\to \infty} \sup_{t\in \R} \Big| g*P_{u}(t)-\frac{1}{x}\sum_{y\le x} \sum_{n\le y} a_{n}n^{-(u+it)} \Big|=0.
\end{equation}
\end{Coro}
\begin{proof}
Observe that by Theorem \ref{casezero}  condition \eqref{cesaro} immediately implies that $g\in H_{\infty}^{\lambda}(\R)$. Conversely, if $g\in H_{\infty}^{\lambda}(\R)$, then Theorem \ref{casezero} implies that there is an ordinary Dirichlet series $D$ such that the $((\log n),k)$-Riesz means of $D$ of any order $k>0$ on $[Re=u]$ converges uniformly to $g*P_{u}$ for every $u>0$. Now using
\cite[Theorems~17~and~30]{HardyRiesz} (see also
 \cite[Theorem 2.7, ii)]{DefantSchoolmann6} and \cite[Theorem 2.8]{DefantSchoolmann6}),  and repeating the vector-valued argument ,of \cite[Corollary~2.17]{DefantSchoolmann6} for fixed $u>0$, we obtain that the $(n,k)$-Riesz means of $D$ of any order $k>0$ converge uniformly on $[Re>2u]$ with limit $g*P_{2u}$ for every $u>0$. Since the $((n),1)$-Riesz means of $D$  are precisely the Ces\`{a}ro means of $D$, the proof is complete.
\end{proof}

\smallskip

\subsection{A Proof of Riesz' Theorem \ref{Theo42}} \label{Appendix2}

A crucial ingredient of our proof of Theorem \ref{Theo42} is given by the Perron-type formula from \cite[Theorem~2.13]{DefantSchoolmann6}:

For $k \geq 0$ let  $D=\sum a_{n} e^{-\lambda_{n}s}$ be a somewhere $(\lambda,k)$-Riesz summable $\lambda$-Dirichlet series  and
 $f: [ \re >\sigma_c^{\lambda,k}(D)] \to \mathbb{C}$ its limit function, where $\sigma_c^{\lambda,k}(D)$ stands for the abscissa of $(\lambda,k)$-Riesz summability of $D$. Then
\begin{equation} \label{Perronsformula2}
R_{x}^{\lambda,k}(f)(0)=\frac{\Gamma(1+k)}{2\pi i}  x^{-k} \int_{c-i\infty}^{c+i\infty} \frac{f(s)}{ s^{1+k}} e^{xs} ds,~~c>0.
\end{equation}
 Given $s \in \mathbb{C}$ and $r >0$, the euklidean ball in $\mathbb{C}$ of radius $r$ and center $s$ is denoted by $B_{r}(s)$.

\begin{proof}[Proof of Theorem \ref{Theo42}]
After translation we may assume that the poles of $f$ are given by $0<|p_{1}|\le |p_{2}|\le \ldots \le |p_{N}|$ with orders $m_{j}=m(p_{j})<1+k$.  We claim that for  every $0 < 2\delta < |p_1|$ and $I = [- i\delta,i\delta]$
\begin{equation}\label{poles}
\lim_{x\to \infty} \sup_{i\tau \in I} |R_{x}^{\lambda,k}(f)(i\tau)-f(i\tau)|=0.
\end{equation}
Indeed, provided that this claim is established, take an arbitrary interval
\begin{equation*}
I=i[a,b]\subset [\re = 0]\setminus \{p_{1},\ldots, p_{N}\}\,.
\end{equation*}
Then for every $i\tau\in I$ the translation $f_{i\tau}(s)=f(s+i\tau)$ of $f$ about $i\tau$ is uniformly $(\lambda,k)$-Riesz summable on $i[-\delta(\tau), \delta(\tau)]$ for some $\delta(\tau) >0$, and so $f$ is uniformly $(\lambda,k)$-Riesz summable on
$[-i\delta(\tau) +i\tau, i\delta(\tau) +i\tau]$. Since
\begin{equation*}
I\subset \bigcup_{i\tau \in I}]-i\delta(\tau) +i\tau, i\delta(\tau) +i\tau[,
\end{equation*}
by compactness there are finitely many $\tau_{1}, \ldots, \tau_{K}\in I$ such that
\begin{equation*}
I\subset \bigcup_{j=1}^{K} [-i\delta(\tau_j) +i\tau_j, i\delta(\tau_j) +i\tau_j]\,,
\end{equation*}
and consequently  $f$ is uniformly $(\lambda,k)$-Riesz summable on $I$.

Let us start the proof of \eqref{poles}, fixing some $0 < 2\delta < |p_1|$. The choice $c=x^{-1}$ in~\eqref{Perronsformula2} leads to
\begin{equation*}
R_{x}^{\lambda,k}(f)(i\tau) =\frac{\Gamma(1+k)e}{2\pi i} \int_{-\infty}^{\infty} f(x^{-1}+i(t+\tau))e^{itx} \frac{x}{(1+ixt)^{1+k}} dt.
\end{equation*}
The idea now is to split this integral  with respect to a disjoint union of sub intervals of $\mathbb{R}$. To do so, choose some $\varepsilon >0$
such that
\begin{equation} \label{isolatedpoles}
\bigcap_{j=1}^N\,\, ]p_j -2\varepsilon, p_j +2\varepsilon[    = \emptyset
\end{equation}
and consider the disjoint decomposition
\[
\R=S\cup [\R\setminus S],~ \text{where}~S=]-\delta, \delta[\,\,\cup \,\,\bigcup_{j=1}^{N} \,]p_{j}-\varepsilon,p_{j}+\varepsilon[\,.
\]
Observe that  $\R\setminus S$ is the union of finitely many  disjoint intervals $J$ formed by the connected components of  $\R\setminus S$.
Now  we  show first that
\begin{equation} \label{red1}
\lim_{x \to \infty} \sup_{\tau \in I}\Big|\int_{-\delta}^{\delta} f(x^{-1}+i(t+\tau))e^{itx} \frac{x}{(1+ixt)^{1+k}} dt-f(i\tau)\Big|=0,
\end{equation}
then second that for all $1\le j \le N$
\begin{equation} \label{red3}
\lim_{x\to \infty} \sup_{\tau \in I}\Big|\int_{p_{j}-\varepsilon}^{p_{j}+\varepsilon} f(x^{-1}+i(t+\tau)) e^{ixt}\frac{x}{(1+ixt)^{1+k}} dt\Big|=0,
\end{equation}
and finally that for all connected components $J$ of $\mathbb{R} \setminus S$
\begin{equation} \label{red2}
\lim_{x \to \infty}  \sup_{\tau\in I}\Big|\int_{J} f(x^{-1}+i(t+\tau))e^{itx} \frac{x}{(1+ixt)^{1+k}} dt\Big|=0\,.
\end{equation}
Note that the proof is complete, whenever these three claims are provided.

Proof of \eqref{red1}: By substitution for every $i\tau \in I$ and $x >1$
\begin{align*}
&\int_{-\delta}^{\delta} f(x^{-1}+i(t+\tau))e^{itx} \frac{x}{(1+ixt)^{1+k}} dt\\&=\int_{-x\delta}^{x\delta} f(x^{-1}+i(yx^{-1}+\tau))e^{iy} \frac{1}{(1+iy)^{1+k}} dy\\ &= \int_{\mathbb{R}} \chi_{x[-\delta,\delta]}(y) f(x^{-1}+i(yx^{-1}+\tau))e^{iy} \frac{1}{(1+iy)^{1+k}} dy.
\end{align*}
Moreover,
\begin{align*}
|\chi_{x[-\delta,\delta]}&(y) f(x^{-1}+i(yx^{-1}+\tau))e^{iy} (1+iy)^{-(1+k)}|\le
\sup_{\substack{|s|\leq 1 +2\delta\\ \re s> 0 } }|f(s)|
|1+iy|^{-(1+k)}\,,
\end{align*}
since $|x^{-1}+i(yx^{-1}+\tau)| \leq 1 + 2 \delta$ for all $y \in x[-\delta, \delta]$\,.
Additionally, since $f$ is uniformly continuous on $[0,1] \times 2I$, we for every $y\in \R$ have uniformly for $i\tau \in I$
\begin{equation*}
\lim_{x\to \infty} \chi_{x[-\delta,\delta]}(y) f(x^{-1}+i(yx^{-1}+\tau))e^{iy} (1+iy)^{-(1+k)}=f(i\tau)e^{iy} |1+iy|^{-(1+k)}.
\end{equation*}
Hence the (uniform) dominated convergence theorem together with  \eqref{Fouriervalue} shows that  uniformly for~$\tau~\in~I$
\begin{equation*}
\lim_{x \to \infty} \int_{-\delta}^{\delta} f(x^{-1}+i(t+\tau))e^{itx} \frac{x}{(1+ixt)^{1+k}} dt=f(i\tau)
\int_{-\infty}^{\infty}e^{iy} \frac{1}{(1+iy)^{1+k}} dy = f(i\tau)\,.
\end{equation*}

Proof of \eqref{red3}: Fix $1 \leq j \leq N$ and let $x>1$.
Since by assumption and (\ref{isolatedpoles})
\begin{equation*}
\sup_{s\in B_{\varepsilon}(ip_{j})\cap[Re> 0]}|(s-ip_{j})^{m_{j}}f(s)|= C_{j}<\infty,
\end{equation*}
we for every $i\tau \in I$ conclude that
\begin{align*}
&\Big|\int_{p_{j}-\varepsilon}^{p_{j}+\varepsilon} f(x^{-1}+i(t+\tau)) e^{ixt}\frac{x}{(1+ixt)^{1+k}} dt\Big|\\ &\le C_{j}\int_{p_{j}-\varepsilon}^{p_{j}+\varepsilon} \frac{1}{|x^{-1}+i(t+\tau-p_{j})|^{m_{j}}} \frac{x}{|1+ixt|^{1+k}} dt\\ &\le C_{j} \int_{-\varepsilon}^{\varepsilon} \frac{1}{|x^{-1}+i(y+\tau)|^{m_{j}}} \frac{x}{|1+ix(y+p_{j})|^{1+k}} dy \\ &
\le C_{j}x^{-k} p_{j}^{-(1+k)} \int_{-\varepsilon}^{\varepsilon} \frac{1}{|x^{-1}+i(y+\tau)|^{m_{j}}} dy\,,
\end{align*}
where, since $x>1$, for the last estimate we use that $  p_j \leq |1+ix(y+p_{j})| $.
Moreover,
\begin{align*}
\int_{-\varepsilon}^{\varepsilon} \frac{1}{|x^{-1}+i(y+\tau)|^{m_{j}}} dy
&
=x^{m_{j}-1} \int_{-\varepsilon+\tau}^{\varepsilon+\tau} \frac{x}{|1+ixy|^{m_{j}}}dy
\\&
\le x^{m_j-1} \int_{(-\varepsilon+\tau)x}^{(\varepsilon+\tau)x} \frac{1}{|1+ir|^{m_{j}}} dr.
\end{align*}
If $m_{j}\ge 2$, then
\begin{equation*}
\int_{-\infty}^{\infty} \frac{1}{|1+ir|^{m_{j}}} dr< \infty
\end{equation*}
and if $ m_j=1$, there is  $C=C(I,\varepsilon)>0$ such that for all~$x,~\tau$
\begin{align*}
&\int_{(-\varepsilon+\tau)x}^{(\varepsilon+\tau)x} \frac{1}{|1+ir|} dr\le \int_{-Cx}^{Cx}\frac{1}{|1+ir|} dr=2\int_{0}^{Cx}\frac{1}{|1+ir|} dr \\ &\le 2\big(1 +\int_{1}^{Cx} \frac{1}{|1+ir|} dr\big)\le 2+ \int_{1}^{C} \frac{1}{r} dr =2+\ln(Cx).
\end{align*}
Hence  all in all we obtain  some $D=D(I,m_j,\varepsilon)>0$  such that for all $x, \tau$
\begin{equation*}
\sup_{\tau \in I}\Big|\int_{p_{j}-\varepsilon}^{p_{j}+\varepsilon} f(x^{-1}+i(t+\tau)) e^{ixt}\frac{x}{(1+ixt)^{1+k}} dt\Big|\le D p_{j}^{-(1+k)}\ln(x)x^{m_{j}-(1+k)},
\end{equation*}
which vanishes as $x\to \infty$, since $m_{j}<k+1$.

\smallskip
Proof of \eqref{red2}: Note first that  each of the finitely many  connected components  $J$ of    $\mathbb{R} \setminus S$ is an interval, and that all of them except two are bounded. Fix such interval $J = i [a,b]$. Then, using in the bounded case the continuity  of $f$ on $[0,1] + J$ and in the unbounded case moreover
the assumption made on  the growth of $f$, we for all $\tau \in I$ and $t\in J$ have
\begin{equation*}
|f(x^{-1}+i(t+\tau))|\le  C(J) |x^{-1}+i(t+\tau)|^{\ell}\le C(J,I) |1+itx|^{\ell}\,.
\end{equation*}
 Consequently
\begin{align*}
\Big|\int_{J} f(x^{-1}+i(t+\tau))e^{itx} \frac{x}{(1+ixt)^{1+k}} dt\Big|
&
\le C(J,I) \int_{a}^{b} \frac{x}{|1+itx|^{1+k-\ell}} dt \\ & =C(I,J) \int_{ax}^{bx} \frac{1}{|1+iy|^{1+k-\ell}} dy,
\end{align*}
which vanishes as $x\to \infty$.
\end{proof}

\end{document}